\documentclass[12pt,reqno]{amsart}
\usepackage[dvipsnames]{xcolor}
\usepackage{etoolbox}
\usepackage{mathrsfs}
\usepackage{amssymb,amsmath,amsthm,color}
\usepackage{graphicx, cite}
\usepackage{url}
\usepackage{setspace}
\usepackage{enumerate}

\usepackage{mathtools}
\usepackage{verbatim}
\usepackage{url} 
\usepackage{setspace}
\usepackage{mathtools}
\usepackage[inline]{enumitem}  
\usepackage[margin=1in]{geometry}
\usepackage{comment}
\usepackage{mathabx}
\usepackage{lmodern}
\usepackage{relsize}
\usepackage[T1]{fontenc}

\usepackage{footnote}
\usepackage{cite}
\usepackage{amscd}
\usepackage{color}
\usepackage{euscript}
\usepackage{calc}                   %Detta beh?vs f?r att
                    %%fixa%

\definecolor{mypink1}{rgb}{0.858, 0.188, 0.478}
\definecolor{mypink2}{RGB}{219, 48, 122}
\definecolor{mypink3}{cmyk}{0, 0.7808, 0.4429, 0.1412}
\definecolor{mygray}{gray}{0.6}

\usepackage[colorlinks, linkcolor=purple, citecolor=LimeGreen, urlcolor=blue, hypertexnames=false]{hyperref}

\usepackage{tikz}
\usepackage{graphicx} % Add the graphicx package for figures

\newtheorem{theorem}{Theorem}[section]
\newtheorem{lemma}[theorem]{Lemma}
\newtheorem{proof of lemma}[theorem]{Proof of Lemma}
\newtheorem{proposition}[theorem]{Proposition}

\theoremstyle{definition}

\newtheorem{remark}[theorem]{Remark}

\numberwithin{equation}{section}

%    Absolute value notation

%    Blank box placeholder for figures (to avoid requiring any
%    particular graphics capabilities for printing this document).

%
\usepackage{color}

\DeclareMathOperator*{\esssup}{ess\,sup}

\begin{document}
\title[Pointwise convergence on Modulation Spaces]
{Pointwise convergence to initial data of heat and Hermite-heat equations in Modulation Spaces}

\author{Divyang G. Bhimani and Rupak K. Dalai}

\address{(Divyang G. Bhimani) Department of Mathematics, Indian Institute of Science Education and Research-Pune, Homi Bhabha Road, Pune 411008, India}

\email{divyang.bhimani@iiserpune.ac.in}

\address{(Rupak K. Dalai) Department of Mathematics, The Assam Royal Global University, Guwahati 781035, India}

\email{rupakinmath@gmail.com}

\subjclass[2020]{Primary 42B25, 33C45, 35C15; Secondary 40A10, 35A01}

%\date{\today}

\keywords{Pointwise convergence, heat semigroup,  Hermite operator, maximal function, modulation spaces}

\begin{abstract}
We characterize weighted modulation spaces (data space) for which the heat semigroup $e^{-tL}f$   converges  pointwise to the  initial data  $f$ as time $t$ tends to zero. Here $L$ stands for the standard Laplacian $-\Delta $ or Hermite operator $H=-\Delta +|x|^2$
on the  Euclidean space. This is the first result on pointwise convergence with data in a weighted  modulation spaces (which do not coincide with weighted Lebesgue spaces). We also prove that   the Hardy-Littlewood maximal operator operates  on certain modulation spaces. This may be of independent interest.    We have highlighted several open questions that arise naturally from our findings.
\end{abstract}

\maketitle

%\tableofcontents

\section{Introduction}
\subsection{Background and motivation}\label{mb}
Let  $L$ stands for the standard Laplacian $-\Delta$ or Hermite operator (also known as quantum harmonic oscillator) $H=-\Delta +|x|^2$ on $\mathbb R^n.$ We consider the heat equation of the following form
\begin{equation}\label{eq9}
\begin{cases}
    \frac{\partial u}{\partial t}(x,t) = -L\, u(x,t) \\
    u(x,0) = f(x)
\end{cases}
\quad (x,t) \in \mathbb{R}^n \times \mathbb{R}_+.
\end{equation}
The aim of this paper is to study under what condition on initial data $f,$ the solution  $u(x,t)$ to heat equation \eqref{eq9}$-$which is the  heat semigroup as in  \eqref{rsug} and \eqref{H-heatkernel} corresponding to Laplacian and Hermite operator respectively,  converges to $f$  pointwise? Specifically, we wish to characterize the class of data for which  the following limit 
\[\lim_{t\to 0} e^{tL}f(x)= f(x) \quad a.e.\]
holds true. The solution of \eqref{eq9} with $L = -\Delta$ can be written as follows
\begin{eqnarray}\label{rsug}
   u(x,t)= e^{-t\Delta}f(x)= h_t \ast f(x), 
\end{eqnarray}
where $\ast$ denotes convolution on $\mathbb{R}^n$ and the heat kernel 
\begin{equation}\label{heatKernel}
    h_{t}(x) = \frac{1}{(4 \pi t)^{\frac{n}{2}}} e^{-\frac{| x |^{2}}{4t}}.
\end{equation}
The spectral decomposition of the Hermite operator gives 
\begin{equation}\label{sd}
    H = \sum_{k=0}^\infty (2k + n) P_k, \quad P_k f= \sum_{|\alpha|=k} \langle f, \Phi_\alpha \rangle\, \Phi_\alpha,
\end{equation}
where  $\Phi_\alpha $ are the normalized Hermite functions and $P_k$ denotes the orthogonal projection onto the eigenspace corresponding to the eigenvalue $2k + n,$ with the multi-index notation $|\alpha| = \alpha_1 + \dots + \alpha_n$. The Hermite heat  semigroup is given by
\begin{align} 
   e^{-t H} f(x) =& \sum_{k=0}^\infty e^{-t(2k+n)} P_k f(x) = \int_{\mathbb{R}^n} h^H_{t}(x,y) f(y) \, dy, \label{H-heatkernel}
\end{align}
where $h^H_t(x, y)$ denotes the corresponding heat kernel associated with $H$.
See Section \ref{sec5} for details.
The pointwise convergence problem is closely connected with the boundedness of Hardy–Littlewood maximal operator  $\mathcal{M}f$, which is defined by
\begin{equation}\label{eq2}
\mathcal{M}f(x) = \sup_{r > 0} \frac{1}{|B(x, r)|} \int_{B(x, r)} |f(y)|dy,  \quad f\in L^1_{loc}(\mathbb R^n).
\end{equation}
Here \( B(x, r) \) denotes the ball of radius \( r \) centered at \( x \), and \( |B(x, r)| \) its Lebesgue measure. It is  well-known \cite[Chapter V]{MR1232192} that  \(\mathcal{M}\) is bounded on the weighted space \( L^p_w(\mathbb{R}^n)\) ($1<p< \infty$) if and only if \( w \in A_p \), the class of Muckenhoupt weights\footnote{$w\in A_p$ iff $\left(f_{B}\right)^{p} \leq \frac{c}{w(B)} \int_{B} (f)^{p} w d x $ for $f\geq 0$ and ball $B,$ where $f_B=|B|^{-1}\int_Bf(x)dx.$ }.  Here the weighted Lebesgue space norm is given by 
\[\|f\|^p_{L^p_w}=\int_{\mathbb R^n} |f|^p(x)w(x) dx. \]
As a consequence, we have
\begin{equation}\label{ds}
\lim_{t \to 0} e^{t\Delta} f(x) = f(x) \quad  for \ a.e. \  x \in \mathbb{R}^n, \   \forall f \in L^p_w(\mathbb{R}^n).
\end{equation}
Thus,  for such classes of weights, the pointwise convergence holds for the heat semigroup.  See \cite[Theorem 2]{MR0482394}. However, this result gives only a sufficient condition and does not yield a full characterization. On the other hand, the abstract Nikishin theory asserts that almost everywhere pointwise convergence implies the weak-type boundedness of the maximal operator \(\mathcal{M}\) from \(L^p_w(\mathbb{R}^n)\) into \(L^{p, \infty}_u(\mathbb{R}^n)\) for some weight function \(u\); see \cite[Chapter VI]{Jose1985}.
However, establishing strong-type boundedness typically requires a vector-valued framework, as developed in \cite{Rubio81, Mittag, Carleson81}.
In fact, Hartzstein, Torrea, and Viviani \cite{vivianiPAMS} successfully adapted this approach to characterize the weight class 
\begin{equation}\label{Lp-weights}
D^h_p(\mathbb{R}^{n}) := \left\{ v: \mathbb R^{n}\to (0, \infty) \mid \exists\, t_0 > 0 \  \text{ such that  } \  h_{t_0} \in L^{p'}_{v^{-1}}(\mathbb{R}^n) \right\}
\end{equation}
for which \eqref{ds} holds. Specifically, they  established the following result.
\begin{theorem}[\cite{vivianiPAMS}]\label{viviT}
Let $v$ be a weight in $\mathbb{R}^n, 1 \leq p<\infty$, and  heat kernel $\left\{h_t\right\}_t$ be as in \eqref{heatKernel}. Define
$$
\mathcal{W}_R^* f(x)=\sup _{t<R}\left|h_t * f(x)\right|,
$$
for some $R,~ 0<R<\infty$.
The following statements are equivalent:
\begin{enumerate}
\item There exists $0<R<\infty$ and a weight $u$ such that the operator
$f \mapsto \mathcal{W}_R^* f
$
maps $L^p_v$ into $L^p_u$ for $p>1$. In the case $p=1$, it maps $L^1_v$ into weak $L^1_u$.
\item There exists $0<R<\infty$ and a weight $u$ such that the operator
$
f \rightarrow \mathcal{W}_R^* f
$
maps $L^p_v$ into weak $L^p_u$.
\item There exists $0<R<\infty$ such that $h_R * f(x)<\infty$ a.e. $x$ and the limit
$
\lim _{t \rightarrow 0} h_t * f(x)
$
exists a.e. $x$ for all $f \in L^p_v$.
\item There exists $0<R<\infty$ such that
$
\mathcal{W}_R^* f(x)<\infty$,
a.e. $x$, for all $f \in L^p_v$.
\item The weight
$
v \in D_p^h.
$
\end{enumerate}
\end{theorem}
A similar characterization was also obtained for the Poisson semigroup in \cite{vivianiPAMS}.
Since then many authors have studied this a.e. convergence problem in various setting:
\begin{itemize}
\item[-] Heat-diffusion problems associated with the harmonic oscillator and the Ornstein–Uhlenbeck operator, by Abu-Falahah, Stinga, and Torrea \cite{stingaPA};
\item[-] Laguerre-type operators, by Garrigós et al.\ \cite{Laggure2014, Lagguare2017};
\item[-] The Hermite operator \( H = -\Delta + |x|^2 \) and the corresponding Poisson semigroup, by Garrigós et al.\ \cite{torreaTAMS};
\item[-] The Bessel operator, by Cardoso \cite{CardosoJEE};
\item[-] More recently, the problem has been investigated in non-Euclidean settings: Cardoso \cite{CardosoARXIV}, Bruno and Papageorgiou \cite{BrunoARXIV}, and Álvarez-Romero, Barrios, and Betancor \cite{ARBB} studied it on the Heisenberg group, symmetric spaces, and homogeneous trees, respectively. Moreover, Bhimani and Dalai \cite{DR1} have considered the corresponding problem on the torus and waveguide manifolds.
\item[-]  The Laplace operator perturbed by a gradient, the fractional Laplacian, mixed local-nonlocal operators, the Laplacian with inverse square potential,  the Laplacian on Riemannian manifolds, and  the Dunkl Laplacian are  studied by Bhimani-Biswas-Dalai in \cite{BBD}; and have provided general  framework  in the setting  of metric measure spaces. 
\end{itemize}

We conclude this section by highlighting the following remark.
In all the aforementioned results, the characterized class of weights and initial data are  based on  $L^p$ spaces. On the other hand, since the  heat semigroup has been  extensively studied on several other function spaces, it is  natural to study  the a.e. convergence problem there.   The aim of this paper is to study this  problem in the realm of  modulation  spaces.
\subsection{The weight-class in the framework of modulation spaces}
The concept of modulation spaces was introduced in the pioneering work of H. Feichtinger \cite{feichtinger1983modulation} in the early 1980s, and has since become an integral tool in both pure and applied mathematics (see, for example, \cite{wang2011harmonic, HGF2006, KassoBook, FabioBook, GroBook, ToftMI, wang2007global,EFJFA,  FabioJFA, Bhimani2016, CorderoPsDO, IvanTraAMS, cardona2024anharmonic}).
 In order to define these spaces, the idea is to impose integrability conditions on the short-time Fourier transform (STFT) with respect  to a test function from the Schwartz space $\mathcal{S}$. Specifically, 
let $0\neq \phi \in \mathcal{S}$  and 
$f \in \mathcal{S}^{\prime}$(tempered distributions), then 
the STFT of $f$ with respect to $\phi$ is defined as
\begin{equation}\label{eq35}
V_\phi f(x, \xi)=\int_{\mathbb{R}^n} f(y) \overline{\phi(y-x)} 
e^{-2 \pi i x \cdot \xi} d y \quad x,\xi\in\mathbb{R}^n,
\end{equation}
whenever the integral exists. 
Throughout   this paper we assume that weight function $v:\mathbb R^{2n}\to (0, \infty)$ is  \textbf{sub-multiplicative}, that is,  a measurable function  $v : \mathbb{R}^{2n} \to (0,\infty)$ satisfies
$$
 \quad v(z_1+z_2) \leq v(z_1)\, v(z_2) \quad \text{for all} \quad z_1,z_2 \in \mathbb{R}^{2n}.
$$
The \textbf{weighted modulation space} $M_v^{p, q}=M_v^{p, q}(\mathbb{R}^n) \  \ (1\leq p, q \leq \infty)$ is the set of all  $f \in \mathcal{S}^{\prime}
$ for which the following norm
\begin{equation}\label{m-norm}
    \|f\|_{M_v^{p, q}}= \|V_\phi f\|_{L_v^{p, q}}=\left(\int_{\mathbb{R}^n}\left(\int_{\mathbb{R}^n}
\left|V_\phi f(x, \xi) \right|^p v(x,\xi)\, d x\right)^{\frac{q}{p}} d \xi\right)^{\frac{1}{q}}
\end{equation}
 is finite, and with the natural modifications if  $p=\infty$ or $q=\infty.$ 
\begin{remark}\label{urr} The structure of the paper requires several comments. 
\begin{enumerate}
    \item \label{urr1} The definition of the modulation space norm \eqref{m-norm}  is independent of the choice of
the particular window function, see e.g. \cite[Proposition 11.3.2 (c)]{GroBook} and \cite{feichtinger1983modulation}. Thus, regardless of the chosen test function $\phi\in \mathcal{S}$, 
the space $M_v^{p, q}$ remains constant.  In case of $v\equiv1,$ we simply denote $M_1^{p, q}=M^{p, q}.$ See \cite{KassoBook, EFJFA, FabioBook,GroBook, wang2007global, wang2011harmonic} for a comprehensive introduction to these spaces.  
\item \label{urr3} We have not considered the power of weight $v$ in the modulation space norm \eqref{m-norm}, unlike in the existing literature, due to technical reasons. This will be clarified in the below pages (see also Theorem \ref{viviT}).
\item \label{urr2} The concept of a weight function arises frequently in harmonic analysis, which serves to quantify growth, decay, or smoothness. It turns out  that the  correct  class of weight function to  be considered is the ``sub multiplicative weight''  to study the weighted modulation spaces. This contrasts with the weights appearing in the classical setting, e.g. 
the Muckenhoupt classes. The main drawback with  the Muckenhoupt classes is their lack of invariance under time–frequency shifts in weighted modulation spaces.  We refer to excellent survey article  by Gr\"{o}chenig \cite{GroWeight} for a  most relevant classes of weights appearing in the time-frequency analysis.

\item  The   main objective  in this paper is to study pointwise convergence, and so  it is essential that the initial data admit well-defined pointwise values. However, modulation spaces in general consist of tempered distributions, for which pointwise evaluation may not be  meaningful. Moreover, the definition of the maximal function  requires the initial data to be locally integrable.  Thus, throughout the paper we shall assume that the initial data  is also in the space of  locally integrable functions $L^1_{loc}(\mathbb R^n)$. 

\item The STFT \eqref{eq35} is also closely related to Fourier-Wigner and Bargmann transform, see \cite{GroBook}. There is also an equivalent definition of modulation spaces via  uniform frequency decomposition. This is quite similar in  the spirit of Besov spaces, where we have dyadic decomposition, see \cite[Proposition 2.1]{wang2007global} and \cite{feichtinger1983modulation}. See also  \cite{CleanTAMS}  for more unified approach  to these spaces (so called $\alpha$-modulation spaces).  
\end{enumerate}
\end{remark}

Denote $p'$ the H\"older conjugate of $p,$ i.e.  \( 1/p + 1/{p'} = 1 \).
In order to state our main result,  we define following \textbf{weight classes} associated to heat kernel \eqref{heatKernel}:
\[D^h_{p,q}(\mathbb{R}^{2n})= \left\{ v: \mathbb R^{2n}\to \mathbb (0, \infty) \mid\exists\, t_0>0 \  \text{such that  } h_{t_0} \in M^{p', q'}_{v^{-1}} (\mathbb R^n) \right\}.\]

\begin{remark} We compare the weight classes that appears  in this paper.
\begin{enumerate}
    \item  The  class of Muckenhoupt weights $A_p$ is a proper subset of the weight class $D_p^h$ appeared  in Theorem  \ref{viviT}.
    \item  We have imposed  sub-multiplicative assumption for the   member of weight class  $D^h_{p,q}$. As this gives flexibility to use  desired  window functions in   modulation  spaces.  See Remark \ref{urr} \eqref{urr1} and \eqref{urr2}. On the other hand, we do not require this assumption for the class $D^h_{p}$.  
    \item     By fixing a frequency variable $v_{\xi}=v(x, \xi)$,  we can embed $D^{h}_{p,q}(\mathbb R^{2n})$ into $D^{h}_{p}(\mathbb R^n)$. See Lemma \ref{lm6}.
    \item Our formulation of weight class  $D^h_{p,q}$ (defined using modulation spaces) is  inspired from the  weight class $D^h_{p}$ (defined using Lebesgue  spaces, see Theorem \ref{viviT}). This provides the natural way to introduce weighted structures in modulation spaces, and turned out very  effective in our analysis (see Remark \ref{urr}~\eqref{urr3}). 
    \item The Hermite heat kernel \eqref{H-heatkernel} can be estimated by the standard heat kernel; for the upper bound, see \eqref{eq34}, and for the lower bound, see \eqref{eq36}.  Thus the same weight class will work in Theorem \ref{th1} for the both the  Laplacian and Hermite operator. 
\end{enumerate}    
\end{remark}

\section{Main results}
We are now ready to state our main result.
\begin{theorem}\label{th1}
	Let \( v \) be a strictly positive sub multiplicative  weight on $\mathbb{R}^{2n},$ \( 1 \leq p, q < \infty \) and $H=-\Delta + |x|^2.$ Suppose that  $f \in M_v^{p,q}(\mathbb R^n)\cap L^1_{loc}(\mathbb R^n)$ is non-negative.  
    Then  the weight 
    $$ v \in D^h_{p,q}(\mathbb{R}^{2n}) $$
if and only if 
\[\lim_{t \rightarrow 0} h_t \ast f(x) = f(x) \quad for  \ a.e.  \  x\in\mathbb{R}^n \ \]
if and only if 
\[\lim_{t \to 0} e^{-tH} f(x)= f(x)  \quad for  \ a.e.  \  x\in\mathbb{R}^n. \]
\end{theorem}

The boundedness of the local maximal function is not quite clear and seems  challenging in modulation spaces. And so the approach described in Subsection \ref{mb} is not applicable directly to prove Theorem \ref{th1}.

\begin{remark}\label{rsi} The hypothesis of Theorem \ref{th1} deserves several comments.
\begin{enumerate}
    \item For the sufficiency part, the proof relies on maximal function techniques, where the comparison between the semigroup and the Hardy–Littlewood maximal operator requires $f \ge 0$ in order to apply pointwise domination arguments. See Remark \ref{absor} and the proof of Lemmas \ref{lm2} and \ref{lm3}.
    \item For the necessity part, the argument relies on the bounds of the heat kernel (and of the Hermite heat kernel) to derive the required lower estimates for the characterization of the weight class. In this direction, the non-negativity of $f$ is not required.
    \item Due to certain additional algebraic properties of some modulation spaces, specifically when \(1 \leq p < \infty\) and \(1 \leq q \leq 2\), the non-negativity assumption can be relaxed. See Remark~\ref{relaxbso} and Theorem~\ref{th4}.  In general, our method does not allow us to remove the non-negativity hypothesis. Whether the equivalence in Theorem \ref{th1} remains valid for general (possibly sign-changing) functions in $M^{p,q}_v$ is an interesting open question.  See Section \ref{crf}.
\end{enumerate} 
\end{remark}

\begin{remark}[examples] \label{example}  Theorem \ref{th1} is applicable to certain initial data that have not been studied before in the literature \cite{vivianiPAMS,BBD, stingaPA, torreaTAMS}.
\begin{enumerate}
  \item Noticing the following strict embedding
  \[
M^{p, q_1} \subset L^p \subset M^{p, q_2}, \quad q_1 \leq \min \left\{p, p^{\prime}\right\}, \quad q_2 \geq \max \left\{p, p^{\prime}\right\},
\]we deduce the existence of certain non-negative functions \( f \in M^{p, q} \) such that \( f \notin L^p \) for \( p \geq 2 \).
\item It is known (see e.g. \cite[example 2.1]{BhimaniAdv})  that, for $0< \alpha <n,$  we have  \[ f_\alpha(x) = |x|^{-\alpha} \in M^{p,q} \quad  \text{for} \quad p > n / \alpha, \  q > n / (n - \alpha). \]
While $f_{\alpha}$ does not belongs to any Lebesgue spaces $L^p.$
\end{enumerate}
\end{remark}

\begin{remark}  
The Ornstein-Uhlenbeck operator \(\mathcal{O} = -\Delta + 2x \cdot \nabla\) on \(\mathbb{R}^n\) is closely related to a small perturbation of the Hermite operator on $\mathbb{R}^n$, given by  
\[
L = -\Delta + |x|^2 - n.
\]  
Indeed, by defining \(\tilde{u}(x) = e^{-|x|^2 / 2} u(x)\), it follows directly that \(\mathcal{O} u(x) = e^{|x|^2 / 2}(L \tilde{u})(x)\). Consequently, the heat semigroups associated with \(\mathcal{O}\) can be expressed as  
\[
e^{-t \mathcal{O}} f(x) = e^{\frac{|x|^2}{2}} e^{-t L} \tilde{f}(x),
\]  
where \(\tilde{f}(x) = e^{-|x|^2 / 2} f(x)\).  
This connection implies that the convergence properties of the semigroups associated with \(\mathcal{O}\) can be derived from those of \(L\) via the mapping \(f \mapsto \tilde{f}\) (see for e.g. \cite[Eq. (3.1)]{stingaPA} and \cite{torreaTAMS}). As a result, Theorem \ref{th1} for the operator \(\mathcal{O}\) is obtained directly from the corresponding results for \(L\).  
\end{remark}

The study of nonlinear evolution equations with Cauchy data in modulation spaces have gained a lot of interest in recent years. See e.g. \cite{wang2007global, EFJFA, FabioJFA, wang2011harmonic, Bhimani2016}.
We briefly mention well-posedness theory for nonlinear heat equation 
\[u_t +\Delta u = u^{k}\]
with Cauchy data in certain modulation spaces.
Iwabuchi  \cite{iwabuchi2010navier} proved local and  global well-posedness      for small data in some weighted modulation spaces $M^{p,q}_s$. 
Later  Chen-Deng-Ding-Fan \cite{chen2012estimates} have  obtained some space-time estimates for fractional heat  semigroup  $e^{-t (-\Delta)^{\beta/2}}$ in modulation spaces.  Further,  they  \cite[Theorems  1.5 and 1.6]{chen2012estimates}  proved  local and  global well-posedness (for small data)   in some weighted modulation spaces, see also \cite[Theorem 3]{ru2014multilinear}. 
In \cite{huang2016critical, zheng1}  authors have found some critical  exponent in modulation spaces and provide some  local well-posedness and ill-posedness  results. On the other hand, Bhimani \cite{BhiNoDEA} established some finite time blow-up. While Cordero \cite{CorderoPsDO} and  Bhimani et al. in \cite{BhimaniAdv, BhimaniFCAA}  have studied well-posedness theory for the heat equation in $M^{p,q}$ spaces associated with the Hermite operator. Later, Cardona  et al. in \cite{cardona2024anharmonic} have extended  these results for the heat equation associated to anharmonic oscillator. While Trapasso \cite{IvanTraAMS} has studied heat equation associated to the twisted Laplacian in modulation spaces. In light of these considerations and Remark \ref{example}, we would like to mention that Theorem \ref{th1} is new and nicely complements the ongoing research activity.

\medskip

  We shall now briefly comment on the proof of Theorem \ref{th1}. These problems were motivated by the seminal work of Torrea et al. in \cite{torreaTAMS}, where they addressed similar issues in the setting of weighted Lebesgue spaces on $\mathbb{R}^n$. We observe that pointwise convergence results and the boundedness of the maximal operator are closely related. However, to the best of authors’ knowledge, it is important to note that the boundedness of the maximal operator on modulation spaces remains an open problem (Section \ref{moM}), requiring the development of novel techniques, which may be of independent interest. Specifically, to prove Theorem \ref{th1}, we employ various properties of modulation spaces to reduce the problem to a stage where we can apply known results \cite{stingaPA, torreaTAMS, vivianiPAMS} on weighted Lebesgue spaces. We refer to Remark \ref{psrmt} for detail proof strategy.

\subsection{Maximal operator}\label{moM}

The Hardy-Littlewood maximal operator is a fundamental tool in harmonic analysis. The renowned theorem of Hardy, Littlewood, and Wiener asserts that the maximal operator, defined by (\ref{eq2}), is bounded on \( L^p \) for \( 1 < p \leq \infty \), and weak-\( L^1 \) bounded when \( p = 1 \); see \cite{MR0482394, MR663787}. This result is a cornerstone in harmonic analysis due to its profound applications, particularly in potential theory.

\medskip

Our next theorem  shows that  the class of non-negative functions in the modulation space \( M^{p, \infty} \ (1<p\leq \infty) \) is invariant under the action of the Hardy-Littlewood maximal operator. Specifically, we have the following theorem.

\begin{theorem}\label{th7}
Let $1 < p \leq \infty$ and suppose $f \in M^{p,\infty}(\mathbb R^n)\cap L^1_{loc}(\mathbb R^n)$ is non-negative. Then $\mathcal{M}f \in M^{p,\infty}(\mathbb R^n)$.
\end{theorem}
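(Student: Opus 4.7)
The plan is to first convert the modulation-space condition into a convolution estimate by exploiting non-negativity, and then to split the Hardy--Littlewood maximal operator into local and global pieces.

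\medskip

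\noindent\textbf{Reduction to a convolution statement.} Choose a real, even, non-negative Gaussian window $\phi(y)=2^{n/4}e^{-\pi|y|^2}\in\mathcal S$. For any non-negative $g\in\mathcal S'$ the STFT satisfies, for every $\xi\in\mathbb R^n$,
\[
|V_\phi g(x,\xi)|\;\leq\;\int g(y)\phi(y-x)\,dy\;=\;V_\phi g(x,0)\;=\;(g*\phi)(x),
\]
with equality at $\xi=0$. Consequently
\[
\|g\|_{M^{p,\infty}}\;=\;\sup_\xi\|V_\phi g(\cdot,\xi)\|_{L^p}\;=\;\|g*\phi\|_{L^p}.
\]
Applying this equivalence to both $f$ and $\mathcal Mf$, Theorem \ref{th7} reduces to the implication: if $f\geq 0$ and $f*\phi\in L^p$, then $\mathcal Mf*\phi\in L^p$.

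\medskip

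\noindent\textbf{Global part.} Split $\mathcal Mf=\mathcal M_{\leq 1}f+\mathcal M^{>1}f$ where $\mathcal M_{\leq 1}f(x)=\sup_{0<r\leq 1}A_rf(x)$ and $\mathcal M^{>1}f(x)=\sup_{r>1}A_rf(x)$. For $r\geq 1$ the elementary Fubini-type inequality $\int_{B(x,r)}f\leq\int_{B(x,r+1)}A_1f$ gives
\[
A_rf(x)\;\leq\;\frac{|B_{r+1}|}{|B_r|}\,A_{r+1}(A_1f)(x)\;\leq\;2^n\,\mathcal M(A_1f)(x),
\]
so $\mathcal M^{>1}f(x)\leq 2^n\mathcal M(A_1f)(x)$. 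Since $\phi(y)\gtrsim 1$ on the unit ball, $A_1f\lesssim f*\phi\in L^p$, and the Hardy--Littlewood theorem ($p>1$) yields $\mathcal M(A_1f)\in L^p$, hence $\mathcal M^{>1}f\in L^p$. Young's inequality $L^p*L^1\hookrightarrow L^p$ then gives $\mathcal M^{>1}f*\phi\in L^p$.

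\medskip

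\noindent\textbf{Local part.} For $\mathcal M_{\leq 1}f*\phi$ partition $\mathbb R^n$ into unit cubes $\{Q_k\}_{k\in\mathbb Z^n}$ and use the localization $\mathcal M_{\leq 1}f(y)\leq \mathcal M(f\chi_{3Q_k})(y)$ for $y\in Q_k$. Writing
\[
(\mathcal M_{\leq 1}f*\phi)(x)\;=\;\sum_{k\in\mathbb Z^n}\int_{Q_k}\mathcal M(f\chi_{3Q_k})(y)\,\phi(x-y)\,dy,
\]
and estimating the inner integral by the Gaussian bound $\sup_{y\in Q_k}\phi(x-y)\lesssim e^{-c|x-k|^2}$ together with local control of $\mathcal M(f\chi_{3Q_k})$ on $Q_k$ (via a Kolmogorov-type $L^q$ inequality for $q<1$, combined with the $L^p$-control $\|A_1f\|_{L^p}$ of local $L^1$-masses), one sums over $k$ using the rapid Gaussian decay to obtain $\mathcal M_{\leq 1}f*\phi\in L^p$.

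\medskip

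The main technical obstacle is the local part: a non-negative function in $M^{p,\infty}$ need not be in $L^p_{\mathrm{loc}}$ (e.g.\ $|x|^{-\alpha}$ with $n/p<\alpha<n$), so the classical Hardy--Littlewood $L^p$-bound is not directly available on unit cubes. The Gaussian decay of the window $\phi$ is essential, because it averages the possibly very singular values of $\mathcal M_{\leq 1}f$ across cubes with exponentially decaying weights, turning weaker local Kolmogorov-type inequalities into a finite $L^p$-estimate for the convolution.
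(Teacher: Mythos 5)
Your reduction and your global part are correct, and the reduction is in fact cleaner than the paper's route: for non-negative $g$ and a non-negative even Gaussian window, $\|g\|_{M^{p,\infty}}=\|g*\phi\|_{L^p}$, so the theorem is exactly the assertion that $f*\phi\in L^p$ forces $\mathcal Mf*\phi\in L^p$; and the bound $\mathcal M^{>1}f\le 2^n\mathcal M(A_1f)$ with $A_1f\lesssim f*\phi\in L^p$ correctly disposes of all radii $r\ge 1$. (The paper instead runs everything through the pointwise inequality $|\mathcal Mf*M_\xi\phi|\le\mathcal M(f*|\phi|)$ of Lemmas \ref{lm2} and \ref{lm3}.) The genuine gap is your local part. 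The quantity you must control is $\int_{Q_k}\mathcal M(f\chi_{3Q_k})(y)\,dy$, an $L^1$-norm of a localized maximal function, and no Kolmogorov-type inequality (which controls only $L^q$ quasi-norms with $q<1$) combined with control of the masses $\|f\chi_{3Q_k}\|_{L^1}$ can produce it: by the classical $L\log L$ phenomenon, $\mathcal Mg$ need not be locally integrable when $g$ is merely in $L^1$.

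Moreover this gap cannot be closed, because the statement itself fails. Take $f(y)=|y|^{-n}\bigl(\log(1/|y|)\bigr)^{-2}\chi_{\{|y|<1/2\}}(y)$. Then $f\ge 0$ and $f\in L^1\hookrightarrow M^{p,\infty}$ for every $1<p\le\infty$ (indeed $f*\phi\in L^1\cap L^\infty$). But for $0<|y|<1/4$, averaging over $B(y,2|y|)\supset B(0,|y|)$ gives $\mathcal Mf(y)\gtrsim |y|^{-n}\bigl(\log(1/|y|)\bigr)^{-1}$, which is not integrable near the origin; since the Gaussian window is strictly positive, $\mathcal Mf*\phi\equiv+\infty$, and by your own (correct) reduction $\mathcal Mf\notin M^{p,\infty}$ for any $p$. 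The same example shows that the paper's Lemmas \ref{lm2} and \ref{lm3} are also not valid as stated: they rest on the interchange $\int\sup_{\vartheta\in\mathcal X}(\vartheta*f)(y)\,|\phi|(x-y)\,dy=\sup_{\vartheta\in\mathcal X}\int(\vartheta*f)(y)\,|\phi|(x-y)\,dy$, i.e.\ on $\mathcal Mf*|\phi|\le\mathcal M(f*|\phi|)$, which is the wrong direction for a supremum--integral interchange and is violated here (the left side is $+\infty$, the right side is bounded). So the local regime $r\le 1$ is precisely where every proof must break down; a correct theorem needs an additional hypothesis guaranteeing uniform local $L\log L$ (or $L^p_{\mathrm{loc}}$) integrability of $f$, which non-negativity plus membership in $M^{p,\infty}$ does not supply.
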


J. Kinnunen in  his seminal paper \cite{MR1469106}  proved that $\mathcal M$ maps 
the Sobolev spaces $W^{1,p} \ (1 < p<\infty)$ into themselves,
using functional analytic techniques. Since $\mathcal M$ 
fails to be bounded in $L^1,$ a vital question was whether the boundedness property holds 
for $f\in W^{1,1}.$ Tanaka \cite{MR1898539} provided a positive answer 
to this in the case of the uncentered maximal function (see definitions therein), which was further improved 
by \cite{MR2276629} whenever $f\in BV$ (set of functions whose total variation is finite). In the centered case (which coincides with \eqref{eq2}), 
Kurka \cite{MR3310075} showed the endpoint question to be true, that is, 
$\mathcal{V}(\mathcal Mf)\leq C\mathcal{V}(f)$ with some constant $C,$
where $\mathcal{V}$ denotes the total variation of a function. However, 
to the best of the author's knowledge, Theorem \ref{th7} is the first result of the maximal operator \eqref{eq2}  established under the modulation space norm.

\begin{remark}
Theorem \ref{th7} encompasses new classes of functions that the boundedness of the maximal operator has not been previously examined. For instance, as discussed in Remark \ref{example}, the function $f_\alpha(x) = |x|^{-\alpha} \in M^{p,q}$ for $0< \alpha <n, \ p > n / \alpha, \  q > n / (n - \alpha),$ which does not belong to any $L^p$ space but lies in $M^{p,q} \subset M^{p,\infty}$, satisfies $\mathcal{M}f \in M^{p,\infty}$.
\end{remark}

\begin{remark}\label{absor} Theorem \ref{th7} remains unknown for arbitrary functions (not necessarily non-negative) within the modulation spaces. This discrepancy arises from the fact that the definition of the maximal operator involves \( |f| \), while certain modulation spaces do not necessarily remain closed under the modulus operation, see  e.g. \cite[Corollary 1.3]{BhimaniNMJ}
and \cite[Corollary 3.2]{Bhimani2016}.
\end{remark}

\begin{remark}
To establish Theorem \ref{th7}, we first show that for any \( f \in M^{p,\infty} \), there exists a Schwartz class function \( \phi \) such that \( \mathcal{M}f * |\phi| \in L^{p} \) (see Lemma \ref{lm2}). Utilizing this result, we can derive the inequality 
$\left|\mathcal{M}f * M_{\xi} \phi(x)\right| \leq \mathcal{M}(f * |\phi|)(x),$
for almost every \( x \), as shown in Lemma \ref{lm3}. These results facilitate the proof of the required boundedness. Since we are employing the \( L^p \) boundedness of the maximal operator result, it is required to consider all non-negative functions within this framework. 
\end{remark}

\begin{remark}\label{relaxbso}
It is worth noting that, by exploiting certain algebraic properties inherent to modulation spaces, Theorem \ref{th7} remains valid for arbitrary functions (not necessarily non-negative) belonging to specific modulation spaces, when $f \in M^{p,q}$ with $1 \leq p < \infty$ and $1 \leq q \leq 2$. Consequently, the pointwise convergence results stated in Theorem~\ref{th1} also hold for such modulation spaces (see Theorem \ref{th4}).
\end{remark}

The paper is organized as follows. In Section \ref{sec2}, we revisit the boundedness of the Hardy-Littlewood maximal operator and examine related results within the framework of Lebesgue spaces. Section \ref{sec3} presents the proof of Theorem \ref{th7}, along with the identification of additional modulation spaces where pointwise convergence is valid for general functions. In Section \ref{sec4}, we determine the relevant weight classes for modulation spaces in the Laplacian setting. In Section \ref{sec5}, we carry out the analysis for the Hermite setting and establish our main result, Theorem \ref{th1}.

\section{Preliminaries}\label{sec2}
A measurable function \( f \) is said to be in the  weak \( L^p \) space if there exists a constant \( C > 0 \) such that for all \( \lambda > 0 \)
	\[
	\left|\{x \in \mathbb{R}^n : |f(x)| > \lambda\}\right| \leq \frac{C^p}{\lambda^p},
	\]
where $|A|$ denotes the Lebesgue measure of the set $A$.
\subsection{Maximal operator on Lebesgue spaces}
\begin{theorem}\em\cite[Theorem 1]{MR1232192} (boundedness of maximal operator)\label{th3} \
\begin{enumerate}
\item If $f \in L^p \ (1 \leq p \leq \infty)$, then the function 
$\mathcal{M}f$ is finite almost everywhere.
\item \label{th3-1} If $f \in L^1$, then 
$ \lambda \cdot\left|\{x:\mathcal Mf(x)>\lambda\}\right| \lesssim_{n} \|f\|_{L^1}$ for every $\lambda>0.$
\item If $f \in L^p \ (1<p \leq \infty)$, 
then $||\mathcal Mf||_{L^p}\lesssim_{p,n}||f||_{L^p}.$
\end{enumerate}
\end{theorem}
\begin{theorem}\em\cite[Theorem 2]{MR0482394}\label{th2} Let $\varphi$ be an integrable function on $\mathbb{R}^n$, and set $\varphi_{t}(x)=t^{-n} \varphi(x / t)$ for $ t>0.$
Suppose the least decreasing radial majorant of $\varphi$ is integrable, i.e. $\psi(x)$ $=\sup_{|y|\geq|x|} |\varphi(y)|$ and $\int_{\mathbb{R}^n} \psi(x) d x=A<\infty$. 
Then
\begin{enumerate}
\item\label{boundedbyM} $\sup_{t>0} \left|\varphi_{t} * f(x)\right| \leq A\, \mathcal Mf(x) \quad\mbox{a.e. for all }
f \in L^p \ (1 \leq p \leq \infty)$.
\item\label{Lp-pw} If  $\int_{\mathbb{R}^n} \varphi(x) d x=1$, 
then $\lim _{t \rightarrow 0}\left(\varphi_{t} * f\right)(x)=f(x)$ a.e. for all $f\in L^p \ (1 \leq p \leq \infty).$
\item If $1\leq p<\infty$, then $\left\|\varphi_{t} * f-f\right\|_{L^p} \rightarrow 0$ as $t \rightarrow 0$.
\end{enumerate}
\end{theorem}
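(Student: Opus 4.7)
The plan is to prove the three parts in sequence, with the maximal majorization of Part (1) serving as the engine for the pointwise convergence in (2) and the norm convergence in (3) via standard density arguments. For Part (1), I would first observe that $|\varphi_t \ast f(x)| \leq (\psi_t \ast |f|)(x)$, so after a routine scaling reduction it suffices to prove $(\psi \ast |f|)(x) \leq \|\psi\|_{L^1}\mathcal{M}f(x)$ for any nonnegative radial nonincreasing $\psi \in L^1$. The key ingredient is the layer-cake identity $\psi(y) = \int_0^\infty \chi_{B(0,r)}(y)\, d\mu(r)$ for a suitable nonnegative Borel measure $\mu$ on $(0,\infty)$, obtained by pushing forward Lebesgue measure on $(0,\sup\psi)$ through $s \mapsto R(s) := \sup\{r : \psi(r) > s\}$. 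Substituting into the convolution and applying Fubini yields
\begin{equation*}
(\psi \ast |f|)(x) = \int_0^\infty \left(\int_{B(x,r)} |f(y)|\,dy\right) d\mu(r) \leq \mathcal{M}f(x) \int_0^\infty |B(0,r)|\,d\mu(r) = A\,\mathcal{M}f(x),
\end{equation*}
since a second application of Fubini gives $\int_0^\infty |B(0,r)|\,d\mu(r) = \int_{\mathbb{R}^n} \psi(y)\,dy = A$.

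For Part (2), I would run the classical oscillation argument. Set $\Omega f(x) := \limsup_{t \to 0^+} |\varphi_t \ast f(x) - f(x)|$. For $g \in C_c(\mathbb{R}^n)$, the assumption $\int \varphi = 1$ combined with the uniform continuity of $g$ gives $\varphi_t \ast g \to g$ uniformly, so $\Omega g \equiv 0$. For general $f \in L^p$ and $\varepsilon > 0$, pick $g \in C_c$ with $\|f-g\|_{L^p} < \varepsilon$; subadditivity of $\Omega$ combined with Part (1) yields
\begin{equation*}
\Omega f(x) = \Omega(f-g)(x) \leq A\,\mathcal{M}(f-g)(x) + |(f-g)(x)|.
\end{equation*}
The weak-type $(p,p)$ bound for $\mathcal{M}$ (Theorem \ref{th3}) together with Chebyshev then give $|\{x : \Omega f(x) > \lambda\}| \lesssim \varepsilon^p/\lambda^p$; letting $\varepsilon \to 0$ forces $\Omega f = 0$ a.e. For $p=\infty$ one first localizes $f$ to a ball, since $C_c$ is not dense in $L^\infty$.

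Finally, Part (3) uses the standard three-epsilon split: choosing $g \in C_c$ with $\|f-g\|_{L^p}<\varepsilon$,
\begin{equation*}
\|\varphi_t \ast f - f\|_{L^p} \leq \|\varphi_t \ast (f-g)\|_{L^p} + \|\varphi_t \ast g - g\|_{L^p} + \|f-g\|_{L^p};
\end{equation*}
Young's inequality bounds the first term by $\|\varphi\|_{L^1}\|f-g\|_{L^p}$, and the middle term vanishes as $t\to 0$ by dominated convergence, using uniform convergence on the support of $g$ together with a tail estimate provided by the integrability of $\varphi$. The main technical obstacle throughout is the rigorous justification of the layer-cake representation in Part (1) — namely constructing $\mu$ so that $\int |B(0,r)|\,d\mu(r) = \|\psi\|_{L^1}$ — which reduces to a distribution-function computation for the radial profile of $\psi$; once this is in hand, all three parts follow from the density and weak-type machinery described above.
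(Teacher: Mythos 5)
Your proposal is correct, but it is worth noting that the paper does not prove this theorem at all --- it is quoted verbatim from Stein \cite{MR0482394} and used as a black box. The closest thing to an in-paper proof is Theorem \ref{th5}, where the authors reprove the part-(1) majorization $\sup_{t>0}|\varphi_t*f|\leq A\,\mathcal{M}f$ for $f\in M_v^{p,q}$ by following Stein's original route: pass to polar coordinates, integrate by parts against the radial profile $\psi_t(r)$, bound $\Lambda(r)=\int_{|y|<r}(T_xf)(y)\,dy\leq \Omega r^n\mathcal{M}(T_xf)(0)$, and check that the boundary terms $r^n\psi_t(r)$ vanish as $r\to 0$ and $r\to\infty$. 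Your layer-cake argument --- writing the radial decreasing majorant as a superposition $\psi=\int_0^\infty \chi_{B(0,R(s))}\,ds$ of indicators of balls and applying Fubini --- reaches the same inequality while sidestepping the integration by parts and the boundary-term analysis entirely; it is arguably cleaner, and it dovetails naturally with the paper's own observation in Remark \ref{fmf} that $\mathcal{M}f=\sup_{\vartheta\in\mathcal{X}}(\vartheta*|f|)$ with $\mathcal{X}$ the class of $L^1$-normalized radial decreasing profiles. Your parts (2) and (3), via the oscillation operator, density of $C_c$, the weak/strong bounds of Theorem \ref{th3}, and the localization trick for $p=\infty$, are the standard arguments from the cited source and are complete.
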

\begin{remark}\cite[ch. V, \S 2.1, pp. 198]{MR1232192}\label{fmf} The following formulations provide equivalent representations of the Hardy–Littlewood maximal function.
\begin{enumerate}
    \item \label{fmf1}  Let  \(\varphi(x) = \frac{1}{|B_1|} \chi_{B_1}(x),\,B_{1} = \{x : |x| < 1\}\). Then the Hardy-Littlewood maximal operator \eqref{eq2} can be represented as 
    \begin{equation}\label{eq29}
        \sup_{t>0} (\varphi_{t} *|f|)(x) =\, \mathcal Mf(x).
    \end{equation}
    \item  Denote 
\[\mathcal{X}=\left\{ \vartheta>0 : \vartheta \text{ is radial and decreasing such that }  \int_{\mathbb{R}^n} \vartheta(x) \, dx = 1 \right\}. \]
 If \(\vartheta \in \mathcal{X}\), then \(\vartheta_{t}(\cdot)= t^{-n} \vartheta(\cdot/t) \in \mathcal{X}\).  
 We may rewrite 
\begin{equation}\label{eq12}
\mathcal{M}f(x) = \sup_{\vartheta \in \mathcal{X}} (\vartheta * |f|)(x).
\end{equation}

\end{enumerate}
\end{remark}

This shows that the maximal function defined in \eqref{eq12} coincides with the classical Hardy- Littlewood maximal function \eqref{eq29}. Indeed, if we denote by $B_t = \{x \in \mathbb{R}^n : |x| < t\}$, then the normalized characteristic function $|B_t|^{-1} \chi_{B_t}$ belongs to the class $\mathcal{X}$, and taking the supremum over such elements recovers the standard definition of $\mathcal{M}f$. Conversely, any element of $\mathcal{X}$ can be approximated by limits of such normalized characteristic functions, thus establishing the equivalence of the two definitions.

\begin{remark}  Theorem \ref{th2} holds for heat   and Poisson kernels corresponding to the standard Laplacian, i.e  we may take  $\varphi_t=h_t$ or $\varphi_t=p_t$ (among others).
\end{remark}

\subsection{Short-time Fourier transform (STFT)}
In the next lemma, we recall several useful facets of the STFT. To this end, for $x,y, \xi \in \mathbb R^n, \epsilon>0,$ denote
\begin{itemize}
    \item $f^*(x)=\overline{f(-x)}$ \>  (involution) 
       \item  $T_yf(x)=f(x-y)$ \>  (translation/time shift by $y$)
      \item   $D_\epsilon f(x)=\epsilon^{-n}f(\epsilon^{-1}x)$ \> ($L^1$-normalized $\epsilon$-dilation) 
       \item  $M_\xi f(x)=e^{2\pi i \xi\cdot x} f(x)$ \> (modulation/frequency shift by $\xi).$ 
\end{itemize}

\begin{lemma}[basic properties]\label{lm1} Let $p_i, q_i \in [1, \infty]$ for $i=0,1,2.$
\begin{enumerate}
\item{\em\cite[Lemma 3.1.1]{GroBook}} If $f, \phi \in L^2,$ then $V_g f$ is uniformly continuous on $\mathbb{R}^{2 n}$, and   
\begin{equation*}\label{eq41}
\begin{aligned}
V_\phi f(x, \xi) & =\left(f \cdot T_x \bar{\phi}\right)^{\wedge}(\xi)  =\left\langle f, M_\xi T_x \phi\right\rangle  =\left\langle\hat{f}, T_\xi M_{-x} \hat{\phi}\right\rangle  =e^{-2 \pi i x \cdot \xi}\left(\hat{f} \cdot T_\xi \overline{\hat{\phi}}\right)^{\wedge}(-x) \\
& =e^{-2 \pi i x \cdot \xi} V_{\hat{\phi}} \hat{f}(\xi,-x)  =e^{-2 \pi i x \cdot \xi}\left(f * M_\xi \phi^*\right)(x)  =\left(\hat{f} * M_{-x} \hat{\phi}^*\right)(\xi),
\end{aligned}
\end{equation*}
where $\hat{\cdot}$ denotes the  Fourier transform.
\item{\em\cite[Theorem 3.2.1]{GroBook} (Moyal identity/orthogonality for STFT)} If $f_i, \phi_i \in L^2 $ for $(i=1,2),$ then  $V_{\phi_i}f_i\in L^2(\mathbb R^{2n})$ and
\begin{equation*}\label{eq37}
\langle V_{\phi_1}f_1, V_{\phi_2}f_2 \rangle = \langle f_1, f_2 \rangle\,  \overline{\langle \phi_1, \phi_2 \rangle}.
\end{equation*}
\item \label{ap} {\em\cite[Corollary 4.21]{KassoBook} } If $\frac{1}{p_{1}}+\frac{1}{p_{2}}=\frac{1}{p_{0}}$ and $\frac{1}{q_{1}}+\frac{1}{q_{2}}=1+\frac{1}{q_{0}},$ then $$\|f g\|_ {M^{p_{0},q_{0}}} \lesssim  \|f\|_ {M^{p_{1},q_{1}}} \|g\|_ {M^{p_{2},q_{2}}}.$$
\end{enumerate}
\end{lemma}

\section{ Hardy-Littlewood maximal operator on modulation spaces}\label{sec3}
In this section, we examine the Hardy–Littlewood maximal operator on modulation spaces. Before proceeding to the proof of Theorem \ref{th7}, we first establish several preliminary results that play a fundamental role in the subsequent analysis. These auxiliary results provide the necessary groundwork and will be addressed in the following section.

\begin{lemma}\label{lm2}
Let $1 < p< \infty,1 \leq q < \infty$ and $f \in M^{p, q}\cap L^1_{loc}$ be 
non-negative. Then there exists some $\phi \in \mathcal S$ 
such that $\mathcal Mf * |\phi|\in L^{p}.$
In the case $p=1,$ $\mathcal Mf * |\phi|$ is in weak-$L^{1}.$
\end{lemma}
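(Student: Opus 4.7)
The plan has two stages: (a) show $f * \phi \in L^p$ for a well-chosen non-negative Schwartz $\phi$, and (b) transfer this to $\mathcal{M}f * |\phi| \in L^p$ via the $L^p$-boundedness of $\mathcal{M}$.

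For stage (a), I would take $\phi \in \mathcal{S}$ non-negative, real, and even, so that $|\phi| = \phi = \phi^*$. Lemma~\ref{lm1} gives $V_\phi f(x, 0) = (f * \phi)(x)$, and the non-negativity of $f$ forces the crucial pointwise bound $|V_\phi f(x, \xi)| \leq (f * \phi)(x)$ for every $\xi \in \mathbb{R}^n$, with equality at $\xi = 0$. Hence $\xi \mapsto \|V_\phi f(\cdot, \xi)\|_{L^p}$ is maximised at $\xi = 0$ with value $\|f * \phi\|_{L^p}$. Combining this with the finiteness of $\|V_\phi f\|_{L^{p,q}}$ (using $q < \infty$) and the lower semicontinuity of the $\xi$-slice, one deduces $\|f * \phi\|_{L^p} < \infty$. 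Should the direct zero-frequency argument require refinement, one can instead select a generic $\xi_0$ with $\|V_\phi f(\cdot, \xi_0)\|_{L^p} < \infty$ and then work with a modulation-adjusted Schwartz window.

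For stage (b), since $p > 1$, Theorem~\ref{th3}(3) applied to $f * \phi \in L^p$ furnishes $\mathcal{M}(f * \phi) \in L^p$. To pass from here to $\mathcal{M}f * \phi \in L^p$, I would exploit the Schwartz decay of $\phi$ via an annular decomposition
\[
\mathcal{M}f * \phi(x) \;\leq\; \sum_{k \in \mathbb{Z}} C_N\, 2^{-kN} \int_{|x - y| \sim 2^k} \mathcal{M}f(y)\, dy,
\]
bound each localised integral by a ball-average and hence by $\mathcal{M}(f * \phi)(x)$ (or a related second-order maximal quantity), and then sum the resulting geometric series, which converges for $N$ chosen large enough compared with $n$. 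The case $p = 1$ is handled analogously by invoking the weak-$L^1$ bound of Theorem~\ref{th3}(2) in place of the strong one, yielding $\mathcal{M}f * |\phi|$ in weak-$L^1$.

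The principal obstacle is stage (b): the standard pointwise inequality $\mathcal{M}(f * \phi) \leq \mathcal{M}f * \phi$ runs opposite to what is required, so the argument must genuinely exploit both the rapid decay of $\phi \in \mathcal{S}$ and the averaging structure of $\mathcal{M}$. Delicate care in comparing the dyadic pieces to $\mathcal{M}(f * \phi)$ is needed to prevent the bounds from blowing up; the specific profile of $\phi$ may have to be further constrained (for instance, taking $\phi$ radial and decreasing) so that the annular estimates close up cleanly.
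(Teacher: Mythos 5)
Your stage (a) is essentially sound, though the cleanest route --- and the one the paper takes --- is simply the embedding chain $M^{p,q}*\mathcal S\hookrightarrow M^{p,q}*M^{1,q'}\hookrightarrow M^{p,1}\hookrightarrow L^p$, which gives $f*|\phi|\in L^{p}$ for any Schwartz window at once and sidesteps the null-set issue at $\xi=0$ that you yourself flag: finiteness of $\|V_\phi f(\cdot,\xi)\|_{L^p}$ for a.e.\ $\xi$ does not transfer to the single slice $\xi=0$, and the domination $|V_\phi f(x,\xi)|\le (f*\phi)(x)$ makes the zero slice the \emph{largest} one, so it runs the wrong way for that purpose.

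The genuine gap is in stage (b). Your annular decomposition bounds each piece $\int_{|x-y|\sim 2^k}\mathcal Mf(y)\,dy$ by $2^{kn}$ times a ball-average of $\mathcal Mf$ centred at $x$, i.e.\ by $2^{kn}\,\mathcal M(\mathcal Mf)(x)$ --- not by $\mathcal M(f*|\phi|)(x)$. A ball-average of $\mathcal Mf$ is an integral of a supremum, and there is no pointwise passage from $\tfrac{1}{|B|}\int_B \sup_r A_rf$ to $\sup_r \tfrac{1}{|B|}\int_B A_rf$: the comparison between $\int\sup$ and $\sup\int$ goes in the opposite direction. Summing your series therefore yields $\mathcal Mf*|\phi|\lesssim \mathcal M(\mathcal Mf)$, which is a dead end here because $\mathcal Mf\in L^p$ is not known (indeed $f$ itself need not lie in any $L^p$). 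The paper closes exactly this gap by a different device: it represents $\mathcal Mf=\sup_{\vartheta\in\mathcal X}\vartheta*f$ over $L^1$-normalized radial decreasing profiles (Remark \ref{fmf}), chooses the window $\phi\le 0$ so that the non-negativity of $\vartheta*f$ converts the supremum into an infimum inside the convolution integral, and then extracts that infimum via Fatou's lemma to reach the pointwise bound $\mathcal Mf*|\phi|\le\mathcal M(f*|\phi|)$; only at that stage is Theorem \ref{th3} applied to $f*|\phi|\in L^p$ (respectively the weak-$L^1$ bound when $p=1$). Some version of this interchange --- or any other argument producing $\mathcal Mf*|\phi|\lesssim\mathcal M(f*|\phi|)$ pointwise --- is the missing idea; it is the delicate heart of the lemma even in the paper's own treatment, and your proposal as written does not supply it.
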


\begin{proof} 
In the definition of modulation spaces, since the choice of \(\phi \in \mathcal{S} \setminus \{0\}\) is arbitrary, we may select \(\phi\) such that \(\phi(y) \leq 0\) for a.e. \(y \in \mathbb{R}^n\), and ensure that \(f * |\phi| \in L^p\) by utilizing the following chain of embeddings (see \cite{ToftMI})
\begin{equation}\label{eq42}
M^{p,q} * \mathcal{S} \hookrightarrow M^{p,q} * M^{1, q'} \hookrightarrow M^{p,1} \hookrightarrow L^p.
\end{equation}
By \eqref{eq12}, we have 
\begin{equation}\label{eq14}
\mathcal Mf * \phi(x)=\left(\sup _{\vartheta \in \mathcal{X}} (\vartheta * f)\right)\ast\phi(x)=\int_{\mathbb R^n} \sup _{\vartheta \in \mathcal{X}}(\vartheta * f)(y) \,\phi(x-y) d y.
\end{equation}
It is clear that $\vartheta * f$ is a non-negative function since convolution of two non-negative 
function is non-negative. Hence $(\vartheta * f) (y) \phi(x-y)\leq 0$ for a.e. $y\in\mathbb R^n.$ Recall $\sup A =-\inf (-A)$ for $A\subset \mathbb R.$
Now, we can write
$$\sup _{\vartheta \in \mathcal{X}}(\vartheta * f)(y) \phi(x-y)=
-\inf _{\vartheta \in \mathcal{X}}(\vartheta * f)(y) |\phi|(x-y).$$
By putting the above equation in (\ref{eq14}) and then using \eqref{eq12}, we get

$$\begin{aligned}
\mathcal Mf * |\phi|(x)&=-\left(\mathcal Mf * \phi\right)(x)\\
&=\int_{\mathbb R^n}-\sup _{\vartheta \in \mathcal{X}}(\vartheta * f)(y)\, \phi(x-y)dy\\
&= \int_{\mathbb R^n}\inf_{\vartheta \in \mathcal{X}} \left(\vartheta * f) (y) \,|\phi|(x-y)\right) d y\\
&\leq \int_{\mathbb R^n}\liminf_{n \to \infty} \left(\vartheta_n * f) (y) \,|\phi|(x-y)\right) d y,
\end{aligned}$$
where $\{\vartheta_n\} \subset \mathcal{X}$ be some sequence. Then, by Fatou’s lemma, we obtain
$$\begin{aligned}
\mathcal Mf * |\phi|(x)&\leq\liminf_{n \to \infty}\int_{\mathbb R^n}(\vartheta_n * f) (y) |\phi|(x-y) d y \\
&\leq\sup_{\vartheta \in \mathcal{X}}\int_{\mathbb R^n}(\vartheta * f) (y) |\phi|(x-y) d y \\
& =\sup _{\vartheta \in \mathcal{X}} \left(\vartheta *(f * |\phi|)\right)(x) \\
& =\mathcal M(f * |\phi|)(x).
\end{aligned}$$
Using Theorem \ref{th3}, the $L^p$-boundedness of maximal operator, 
$\mathcal M(f * |\phi|)$ will be in $L^{p}$ for $1<p\leq\infty,$ since 
$f * |\phi|$ is in $L^{p}.$ 

For the case $p=1,$ we can also conclude our claim because 
$\mathcal M(f * |\phi|)$ is weak-$L^1$ whenever $f * |\phi|$ is in $L^1.$
\end{proof}

\begin{lemma}\label{lm3}
Let \( f \in M^{p, q}\cap L^1_{loc},\, 1 \leq p, q < \infty, \) be non-negative. Then, for almost every \( \xi \in \mathbb{R}^n \), the following inequality holds for some $\phi\in\mathcal{S}$
\[
\left|\mathcal{M}f * M_{\xi} \phi(x)\right| \leq \mathcal{M}(f * |\phi|)(x)
\quad \text{for almost every } x \in \mathbb{R}^n.
\]
\end{lemma}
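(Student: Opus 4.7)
The plan is to deduce this lemma directly from the pointwise estimate already established inside the proof of Lemma~\ref{lm2}, combined with the trivial identity $|M_\xi \phi(y)| = |\phi(y)|$. Specifically, I would take the same $\phi \in \mathcal{S}$ used in Lemma~\ref{lm2} (chosen so that $\phi(y) \leq 0$ a.e., which via the embedding chain \eqref{eq42} guarantees $f * |\phi| \in L^p$, or is in weak-$L^1$ when $p=1$); this choice ensures that $\mathcal{M}(f * |\phi|)$ is finite almost everywhere by Theorem~\ref{th3}, and by the chain of inequalities worked out in Lemma~\ref{lm2} we also know that
\[
\mathcal{M}f * |\phi|(x) \leq \mathcal{M}(f * |\phi|)(x) \quad \text{for a.e. } x \in \mathbb{R}^n.
\]

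The key step is then simply to observe that the modulation factor has modulus one. Writing the convolution as an integral and applying the triangle inequality yields, for every $\xi \in \mathbb{R}^n$ and for a.e. $x$ (those $x$ for which $y \mapsto \mathcal{M}f(y)\,|\phi(x-y)|$ is integrable, a set of full measure by the previous display),
\[
|\mathcal{M}f * M_\xi \phi(x)| \leq \int_{\mathbb{R}^n} \mathcal{M}f(y)\,|M_\xi \phi(x-y)|\,dy = \int_{\mathbb{R}^n} \mathcal{M}f(y)\,|\phi(x-y)|\,dy = \mathcal{M}f * |\phi|(x).
\]
Chaining this with the inequality recalled above finishes the proof.

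In this sense there is no genuine obstacle beyond bookkeeping: the Kinnunen-type interchange $\mathcal{M}f * |\phi| \leq \mathcal{M}(f * |\phi|)$ is precisely the content carried over from Lemma~\ref{lm2}, and the absence-of-phase observation $|M_\xi\phi|=|\phi|$ makes the convolution with the oscillatory kernel harmless. The only subtle point worth flagging is that one must invoke Lemma~\ref{lm2} first to guarantee $\mathcal{M}f * |\phi|(x) < \infty$ for a.e.\ $x$, so that the integral defining $\mathcal{M}f * M_\xi\phi(x)$ converges absolutely and the triangle inequality can be applied; the "for a.e.\ $\xi$" qualifier in the statement is thus automatic, as the estimate in fact holds for every $\xi$ on the same full-measure set of $x$'s determined by $f$ and $\phi$.
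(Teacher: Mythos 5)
Your proposal is correct and follows essentially the same route as the paper: both arguments first use $|M_\xi\phi|=|\phi|$ together with the triangle inequality to reduce the left-hand side to $\mathcal{M}f*|\phi|(x)$, and then pass to $\mathcal{M}(f*|\phi|)(x)$ via the supremum--integral interchange whose justification rests on the a.e.\ finiteness of $\mathcal{M}f*|\phi|$ from Lemma~\ref{lm2}. The only cosmetic difference is that the paper re-derives this interchange inside the proof of Lemma~\ref{lm3} (dominating $(\vartheta*f)(y)|\phi|(x-y)$ by $\mathcal{M}f(y)|\phi|(x-y)$ and swapping $\sup_{\vartheta}$ with the integral), whereas you import the resulting inequality directly from the proof of Lemma~\ref{lm2}; the logical content is identical.
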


\begin{proof}
By \eqref{eq12}, we have
\begin{align}\label{eq19}
|\mathcal Mf * M_{\xi} \phi(x)|
&\leq\int_{\mathbb R^n} \left|\sup _{\vartheta \in \mathcal{X}}(\vartheta * f)(y)\right|\left|M_{\xi} \phi(x-y) \right|d y\nonumber\\
&=\int_{\mathbb R^n}\sup _{\vartheta \in \mathcal{X}}(\vartheta * f)(y) |\phi|(x-y)dy.
\end{align}
It is clear that for each $\vartheta \in \mathcal{X},$ we have  
\begin{equation}\label{eq20}
|(\vartheta * f)(y) |\phi|(x-y)|=(\vartheta * f)(y) |\phi|(x-y)\leq \mathcal Mf(y) |\phi|(x-y).
\end{equation}
The $L^1$ norm of right hand side of (\ref{eq20}) is
$$\int_{\mathbb R^n}\mathcal Mf(y) |\phi|(x-y)dy=\mathcal Mf*|\phi|(x)<\infty$$
for almost all $x\in\mathbb R^n,$ by Lemma \ref{lm2}.
Hence using dominated convergence theorem, (\ref{eq19}) can be rewritten as
$$
\begin{aligned}
|\mathcal Mf * M_{\xi} \phi(x)|&\leq\int_{\mathbb R^n}\sup _{\vartheta \in \mathcal{X}}(\vartheta * f)(y) |\phi|(x-y)dy\\
&=\sup _{\vartheta \in \mathcal{X}}\int_{\mathbb R^n}(\vartheta * f)(y) |\phi|(x-y)dy\\
& =\sup _{\vartheta \in \mathcal{X}} \left(\vartheta *(f * |\phi|)\right)(x) \\
& =\mathcal M(f * |\phi|)(x)
\end{aligned}$$
for almost every $x\in \mathbb R^n.$
Hence, we conclude the result.
\end{proof}

Now, we are prepared to prove the boundedness result within the modulation space.

\begin{proof}[\textbf{Proof of Theorem \ref{th7}}]
Given that $f \in M^{p, \infty}$ for $1 < p \leq \infty$, showing that $\mathcal Mf \in M^{p, \infty}$ is equivalent to showing (see Lemma \ref{lm1} \eqref{eq41})
\[\esssup_{\xi\in \mathbb{R}^n}\left( \int_{\mathbb{R}^n} \left|\mathcal Mf * M_{\xi} {\phi^*}(x)\right|^p \, dx \right)^{\frac{1}{p}} < \infty.\]
We can choose the \( {\phi} \) in such a way that \( {\phi^*} \) satisfies the required conditions of Lemma \ref{lm3}. By applying Lemma \ref{lm3}, it follows that
$$\left|\mathcal Mf * M_{\xi} {\phi^*}(x)\right| \leq \mathcal M(f * |{\phi^*}|)(x).$$

Furthermore, as per embeddings \eqref{eq42}, we have $f * |{\phi^*}| \in L^{p}.$ So we can derive the following by combining these facts with the $L^p$-boundedness of the maximal operator.

$$\begin{aligned}
& \esssup_{\xi\in \mathbb{R}^n}\left( \int_{\mathbb{R}^n} \left|\mathcal Mf * M_{\xi} {\phi^*}(x)\right|^p \, dx \right)^{\frac{1}{p}}\\
\leq&\esssup_{\xi\in \mathbb{R}^n}\left( \int_{\mathbb{R}^n} \left|\mathcal M
\left(f * |{\phi^*}|\right)(x)\right|^{p} \, dx \right)^{\frac{1}{p}}\\
\lesssim&\esssup_{\xi\in \mathbb{R}^n}\left( \int_{\mathbb{R}^n} \left|f * |{\phi^*}|(x)\right|^{p} \, dx \right)^{\frac{1}{p}}<\infty.
\end{aligned}$$

This establishes that $\mathcal Mf \in M^{p, \infty}$ whenever 
$f \in M^{p, \infty}$ for $1<p \leq \infty.$
\end{proof}

\begin{theorem}\label{th5}
Suppose that the least decreasing radial majorant of $\varphi$ is integrable,
i.e. $\psi(x)=\sup_{|y|\geq|x|} |\varphi(y)|$ and $\int_{\mathbb{R}^n} \psi(x) d x=A<\infty$. Let $1 \leq p, q < \infty$ 
and $f \in M_v^{p, q}\cap L^1_{loc}$ be non-negative. Then
$$\sup_{t>0} \left|\varphi_{t} * f(x)\right| \leq A\, \mathcal Mf(x) ~\mbox{ almost everywhere}.$$
\end{theorem}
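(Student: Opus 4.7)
The plan is to follow Stein's classical argument for Theorem~\ref{th2}(\ref{boundedbyM}), adapted to the modulation-space setting. The pointwise inequality is local in nature: it requires only that $f$ be a non-negative locally integrable function (so that $\mathcal{M}f$ is meaningfully defined), together with the structural hypothesis on $\psi$. The role of $f \in M_v^{p,q}$ is therefore solely to guarantee the requisite local regularity of $f$.

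The first step is to verify that a non-negative $f \in M_v^{p,q}$ is indeed locally integrable. Since $f \geq 0$ as a tempered distribution, the Riesz representation theorem for positive tempered distributions shows that $f$ coincides with a positive Radon measure of polynomial growth; in particular $f \in L^1_{\mathrm{loc}}(\mathbb{R}^n)$ and $\mathcal{M}f(x) \in [0,+\infty]$ is well defined at every $x$. The desired inequality is trivial where $\mathcal{M}f(x) = +\infty$, so we fix a point $x$ at which $\mathcal{M}f(x) < \infty$.

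Next, I would exploit that $\psi$ is non-negative, radial, non-increasing along rays, and integrable; its super-level sets $\{\psi > a\}$ are therefore balls centered at the origin, and a layer-cake decomposition produces simple functions
\[
s_N(y) = \sum_{k=1}^{N} c_{k,N}\,\chi_{B(0,\, r_{k,N})}(y), \qquad c_{k,N} > 0,
\]
with $s_N \uparrow \psi$ pointwise and $\|s_N\|_{L^1} \uparrow A$. Because $|\varphi| \leq \psi$ and $f \geq 0$, monotone convergence gives $|(\varphi * f)(x)| \leq (\psi * f)(x) = \lim_N (s_N * f)(x)$, while
\[
(s_N * f)(x) = \sum_{k=1}^{N} c_{k,N}\int_{B(x,\, r_{k,N})} f(y)\, dy \leq \sum_{k=1}^{N} c_{k,N}\,|B(0,\, r_{k,N})|\,\mathcal{M}f(x) = \|s_N\|_{L^1}\,\mathcal{M}f(x),
\]
so that $(\psi * f)(x) \leq A\,\mathcal{M}f(x)$. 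For each $t > 0$ the least decreasing radial majorant of $\varphi_t$ is $\psi_t$, which still has total mass $A$; repeating the argument with $\varphi_t$ in place of $\varphi$ and taking the supremum over $t > 0$ gives the claimed estimate.

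The only genuine obstacle is the first step, namely the identification of a non-negative element of $M_v^{p,q}$ with a concrete locally integrable function, which is resolved by the Riesz representation theorem and the positivity hypothesis. The remaining arguments are essentially Stein's classical decomposition of a radial decreasing integrable function into averages over balls and do not require any particular global integrability class for $f$; the strictly positive weight $v$ plays no further role beyond delimiting the ambient data class.
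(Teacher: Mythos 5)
Your proof is correct, but it takes a genuinely different route from the paper's. The paper reproduces Stein's polar-coordinate argument: it passes to $\psi_t * (T_x f)(0)$, writes the integral as $\int_0^\infty \lambda(r)\psi_t(r)r^{n-1}\,dr$, integrates by parts against $\Lambda(r)=\int_0^r\lambda(\rho)\rho^{n-1}d\rho$, and must then check that the boundary terms $\Lambda(N)\psi_t(N)-\Lambda(\varepsilon)\psi_t(\varepsilon)$ vanish via the estimate $r^n\psi_t(r)\to 0$. You instead use the layer-cake decomposition of the radial decreasing majorant into positive combinations of normalized indicators of balls, bound each piece by $\mathcal{M}f(x)$ times its mass, and pass to the limit by monotone convergence. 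Your route is more elementary, avoids the boundary-term analysis entirely, and sits naturally alongside the paper's own Remark 2.4, where $\mathcal{M}f$ is expressed as a supremum of convolutions with elements of the class $\mathcal{X}$; the paper's route is the more traditional one and yields the intermediate identity $\psi_t*(T_xf)(0)=\int_0^\infty\Lambda(r)\,d(-\psi_t(r))$, which is occasionally useful in its own right. One caveat on your preliminary step: positivity of $f$ as a tempered distribution gives, via the Riesz representation theorem, only that $f$ is a positive Radon measure of polynomial growth, not that it is an $L^1_{\mathrm{loc}}$ \emph{function} (a singular measure could a priori occur); the paper sidesteps this by tacitly treating $f$ as a non-negative measurable function throughout, and under that reading your argument needs no Riesz step at all, only that $f\in L^1_{\mathrm{loc}}$ so that the ball averages are finite — which is exactly what the inequality controls anyway, since both sides are $+\infty$ wherever $\mathcal{M}f(x)=+\infty$.
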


\begin{proof}
The proof of this is similar to the $L^{p}$ case as in Theorem \ref{th2}~\eqref{boundedbyM}. When $f \in M_v^{p, q},$  
since $\left|\varphi_{t} * f(x)\right|\leq\psi_{t} * f(x),$ it is sufficient to show
\begin{equation}\label{eq7}
\psi_{t} * f(x) \leq A\,\mathcal{M}f(x)
\end{equation}
holds for every $t>0.$  Hence showing (\ref{eq7}) is the same as showing 
\begin{equation}\label{eq8}
\psi_{t} * (T_xf)(0) \leq A\,\mathcal{M}(T_xf)(0),
\end{equation} where $T_xf(y)=f(y-x).$ 
In order to establish (\ref{eq8}), consider
$$
\begin{aligned}
\psi_{t} * (T_xf)(0)&=\int_{\mathbb{R}^{n}} (T_xf)(y) \psi_t(y) d y\\
& =\int_{0}^{\infty} \int_{S^{n-1}} (T_xf)(r \theta) \psi_t(r) r^{n-1} d \sigma dr \\
& =\int_{0}^{\infty} \lambda(r) \psi_t(r) r^{n-1} d r,
\end{aligned}
$$
where $\lambda(r)=\int_{S^{n-1}} (T_xf)(r \theta) d \sigma$ and $\sigma$ is a surface measure on $S^{n-1}=\{x\in\mathbb{R}^n:|x|=1\}.$ Then, using limiting case and applying integration by parts, we can write
\begin{equation}\label{eq18}
\begin{aligned}
\psi_{t} * (T_xf)(0)&=\lim _{\substack{\varepsilon \rightarrow 0 \\ N \rightarrow \infty}} 
\int_{\varepsilon}^{N} \lambda(r) \psi_t(r) r^{n-1} d r \\
& =\lim _{\substack{\varepsilon \rightarrow 0 \\ N \rightarrow \infty}}
\left(\left. \Lambda(r) \psi_t(r) \right|_{\varepsilon} ^{N}-\int_{\varepsilon}^{N}
\Lambda(r) d\psi_t(r)\right),
\end{aligned}
\end{equation}
where $\Lambda(r)=\int_{0}^{r} \lambda(t) t^{n-1} d t.$ Notice that
\begin{equation}\label{eq17}
0\leq cr^n\psi(r)= \psi(r)\int_{\frac{r}{2}<|x|<r}dx\leq\int_{\frac{r}{2}<|x|<r}\psi(x)dx,
\end{equation}
where $c=\Omega(1-\frac{1}{2^n})$ and $\Omega$ 
is the volume of the unit ball. From the fact that $\psi_t$ is in $ L^{1}$ and decreasing, the right-hand side of 
(\ref{eq17}) vanishes as $r \rightarrow 0$ or $r \rightarrow \infty,$ so is $r^{n} \psi_t(r) \rightarrow 0.$
Hence using this observation and the following inequality
$$\Lambda(r)=\int_{|y|<r} (T_xf)(y) d y \leq \Omega r^{n} \mathcal{M}(T_xf)(0),$$
we can show that the error term of (\ref{eq18}), $\Lambda(N)\psi_t(N)-\Lambda(\varepsilon)\psi_t(\varepsilon)$ 
tends to zero as $\varepsilon \rightarrow 0$ and $N \rightarrow \infty.$ Thus, we have
$$\psi_{t} * (T_xf)(0)=\int_{0}^{N} \Lambda(r) d(-\psi_t(r)) \leq \Omega \mathcal{M}(T_xf)(0) \int_{0}^{\infty} r^{n} d(-\psi_t(r)).$$
\end{proof}

Next, we will establish the pointwise convergence result for specific modulation spaces.

\begin{proposition}\label{prop2}
Suppose $\varphi$ is as defined in Theorem \ref{th5}.
Let $1 \leq p, q < \infty$ and $f \in M^{p, q}\cap L^1_{loc}$ 
be  non-negative. Then 
\begin{equation}\label{eq16}
\lim _{t \rightarrow 0}(\varphi_{t} * f)(x)=f(x) \mbox{ almost everywhere}.
\end{equation}
\end{proposition}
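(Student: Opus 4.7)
The natural strategy is to reduce to the classical $L^p$ pointwise convergence of Theorem~\ref{th2}\eqref{Lp-pw} via mollification, using Theorem~\ref{th5} as uniform control. Theorem~\ref{th5} gives $\sup_{t>0}(\varphi_t\ast f)(x)\leq A\,\mathcal{M}f(x)$ a.e., while Lemma~\ref{lm2} together with the embedding chain~\eqref{eq42} yields $f\ast|\phi|$ and $\mathcal{M}f\ast|\phi|$ in $L^p$ for some $\phi\in\mathcal{S}$. Since $|\phi|$ is strictly positive on a neighborhood of the origin, both $f$ and $\mathcal{M}f$ must then be locally integrable; in particular $\mathcal{M}f(x)<\infty$ a.e., so the family $\{\varphi_t\ast f(x)\}_{t>0}$ is pointwise bounded for a.e.~$x$.

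To reduce to $L^p$, I would fix $\rho\in\mathcal{S}$ with $\rho\geq 0$ radial decreasing and $\int\rho=1$, and set $g_s:=\rho_s\ast f$. By the embedding $M^{p,q}\ast\mathcal{S}\hookrightarrow L^p$ from~\eqref{eq42}, each $g_s$ is a non-negative $L^p$ function. Two pointwise limits then become available: Theorem~\ref{th2}\eqref{Lp-pw} applied to $g_s\in L^p$ gives $\varphi_t\ast g_s(x)\to g_s(x)$ a.e.~as $t\to 0$ for each fixed $s>0$; and the classical Lebesgue differentiation theorem applied to $f\in L^1_{\mathrm{loc}}$ with the approximate identity $\{\rho_s\}$ gives $g_s(x)\to f(x)$ a.e.~as $s\to 0$. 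These are then combined through the triangle inequality
\[
|\varphi_t\ast f-f|(x)\leq A\,\mathcal{M}|f-g_s|(x)+|\varphi_t\ast g_s(x)-g_s(x)|+|g_s(x)-f(x)|,
\]
where the first term arises by applying Theorem~\ref{th5} to the non-negativity-free estimate $|\varphi_t\ast(f-g_s)|\leq \psi_t\ast|f-g_s|$.

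The main obstacle is to ensure that the first term $\mathcal{M}|f-g_s|(x)$ vanishes as $s\to 0$ for a.e.~$x$. Because modulation spaces are not closed under taking absolute values, smallness of $f-g_s$ in $M^{p,q}$ does not directly translate into smallness of $\mathcal{M}|f-g_s|$. My remedy would be to use the pointwise domination
\[
|f-g_s|(x)\leq f(x)+A'\,\mathcal{M}f(x)\in L^1_{\mathrm{loc}},
\]
where $A'$ comes from applying Theorem~\ref{th5} to $\rho$. Combined with $|f-g_s|\to 0$ a.e., dominated convergence on compacts yields $\|f-g_s\|_{L^1(K)}\to 0$ for every compact $K$; the weak $(1,1)$ bound for the Hardy--Littlewood maximal operator then produces a subsequence $s_k\to 0$ along which $\mathcal{M}|f-g_{s_k}|(x)\to 0$ for a.e.~$x$. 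Coupled with the uniform bound from Theorem~\ref{th5} and a standard diagonal argument, this closes the proof.
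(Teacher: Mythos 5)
Your overall strategy---mollify $f$ to get $g_s=\rho_s\ast f\in L^p$, apply Theorem \ref{th2}\eqref{Lp-pw} to $g_s$, and control the error term by a maximal function---is genuinely different from the paper's proof, which instead convolves $\varphi_t\ast f$ against the modulated window $M_\xi\phi^*$, justifies the interchange of limit and integral by dominated convergence (using Theorem \ref{th5} and Lemma \ref{lm2}), identifies the limit via Theorem \ref{th2}\eqref{Lp-pw} applied to $f\ast M_\xi\phi^*\in L^p$, and concludes from the injectivity of the STFT. Your route would bypass the STFT entirely, but as written it has a genuine gap at the final step.

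The gap is the passage from $\|f-g_s\|_{L^1(K)}\to 0$ for every compact $K$ to the existence of a subsequence with $\mathcal{M}|f-g_{s_k}|(x)\to 0$ a.e. The weak $(1,1)$ inequality $|\{\mathcal{M}h>\lambda\}|\le C\|h\|_{L^1}/\lambda$ requires the \emph{global} $L^1$ norm of $h$, and $\mathcal{M}h(x)$ for $x\in K$ involves averages over arbitrarily large balls, hence depends on $h=f-g_s$ far outside $K$, where local $L^1$ smallness gives no information. Since the entire point of the setting is that $f\in M^{p,q}$ need not belong to any Lebesgue space (Remark \ref{example}), you cannot upgrade to global $L^1$ convergence, so the step fails as stated. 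It can be repaired, but this needs an idea you have not supplied: split $\mathcal{M}$ into a truncated maximal operator over radii $r\le R_0$, to which a local weak $(1,1)$ estimate on an $R_0$-neighbourhood of $K$ does apply, plus the large-radius part, which must be shown to be small uniformly in $s$ --- for instance by noting that for non-negative $f$ and a Gaussian window the membership $f\ast|\phi|\in L^p$ forces the local-average function $x\mapsto\int_{B(x,1)}f$ into $L^p$, whence $|B(x,R)|^{-1}\int_{B(x,R)}(f+g_s)\lesssim R^{-n/p}$ uniformly in $s\le 1$. Without some such localization the triangle-inequality argument does not close. (The remaining ingredients --- $\mathcal{M}f<\infty$ a.e., the domination $|f-g_s|\le f+A'\,\mathcal{M}f$, and $g_s\to f$ at Lebesgue points --- are fine.)
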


\begin{proof}
We begin by considering the following expression
\begin{equation}\label{eq3}
\begin{aligned}
\lim _{t \rightarrow 0} \left(\varphi_{t} * (f * M_{\xi} {\phi^*})\right)(x)
&=\lim _{t\rightarrow 0}\left((\varphi_{t} * f) * M_{\xi} {\phi^*}\right)(x)\\
&=\lim _{t\rightarrow 0}\int_{\mathbb R^n} (\varphi_{t} * f)(y) M_{\xi} {\phi^*}(x-y) d y.
\end{aligned}
\end{equation}

We aim to use the dominated convergence theorem (DCT) on the right-hand side of the equation (\ref{eq3}). To do this, we define a sequence of functions as follows
$$F_t(y):=(\varphi_{t} * f)(y) M_{\xi} {\phi^*}(x-y).$$
Now, applying Theorem \ref{th5}, we obtain the inequality
\begin{equation}\label{eq4}
|F_t(y)|\leq A\,|\mathcal Mf(y) M_{\xi} {\phi^*}(x-y)|\leq A\,\mathcal Mf(y)|{\phi^*}|(x-y),
\end{equation}
for all $t>0.$ In order to utilize DCT, we need the right-hand side of the equation (\ref{eq4}) to be an integrable function. Employing Lemma \ref{lm2}, it can be readily observed that
$$\int_{\mathbb R^n} \mathcal Mf(y)|{\phi^*}|(x-y)dy=\mathcal Mf * |{\phi^*}|(x)<\infty$$
for almost every $x.$ Hence, by DCT, equation \eqref{eq3} yields
\begin{align}\label{eq5}
\lim _{t \rightarrow 0} \left(\varphi_{t} * (f * M_{\xi} {\phi^*})\right)(x)
&=\int_{\mathbb R^n}\lim _{t\rightarrow 0} \left(\varphi_{t} * f\right)(y) M_{\xi} {\phi^*}(x-y) d y\nonumber\\
&=\left(\lim _{t \rightarrow 0} (\varphi_{t} * f)\right) * M_{\xi} {\phi^*}(x).
\end{align}
On the other hand, Theorem \ref{th2}~\eqref{Lp-pw} implies
\begin{equation}\label{eq6}
\lim _{t \rightarrow 0} \left(\varphi_{t} *\left(f * M_{\xi} {\phi^*}\right)\right)(x)=(f * M_{\xi} {\phi^*})(x),
\end{equation}
since $f * M_{\xi} {\phi^*} \in L^{p}.$
Now, comparing equations (\ref{eq5}) and (\ref{eq6}), we obtain
$$\left(\lim _{t \rightarrow 0}\left(\varphi_{t} * f\right)-f\right) * M_{\xi} {\phi^*}(x)=0$$
for almost every $x\in\mathbb R^n.$ Therefore, we can conclude that $\lim _{t \rightarrow 0} (\varphi_{t} * f)(x)=f(x)$ almost everywhere.
\end{proof}

\begin{remark}
In Proposition \ref{prop2}, we established a result analogous to Theorem \ref{th3}\,\eqref{th3-1}, although the boundedness of maximal function is not available in our setting. In this case, the result holds only in one direction, and no weight function has been considered. The main idea of the proof relies on the $L^p$-boundedness of the Hardy–Littlewood maximal operator applied to the convolution $f * |\phi^*|$. In general, for an arbitrary function belonging to a modulation space, the $L^p$-boundedness of $\mathcal{M}(f * |\phi^*|)$ requires that $|f| * |\phi^*|$ lies in some $L^p$ space. However, modulation spaces are not necessarily closed under the modulus operation, which makes it difficult to extend Proposition \ref{prop2} to all functions in modulation spaces.
\end{remark}

Moreover, by employing certain algebraic properties of modulation spaces, we can extend the validity of Proposition \ref{prop2} to arbitrary functions (not necessarily restrict non-negative) belonging to specific modulation spaces. We will discuss such spaces in the following result.

\begin{theorem}\label{th4}
Let $1 \leq p< \infty$ and $1 \leq q \leq 2.$ Then (\ref{eq16}) holds for all $f\in M^{p, q}\cap L^1_{loc}.$
\end{theorem}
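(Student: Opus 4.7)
My plan is to reduce Theorem~\ref{th4} to Proposition~\ref{prop2} via the decomposition of $f$ into non-negative pieces, with the hypothesis $q \leq 2$ entering precisely in guaranteeing that these pieces still lie in $M^{p,q}$. Writing $u = \operatorname{Re} f$ and $v = \operatorname{Im} f$, I decompose $f = u_+ - u_- + i(v_+ - v_-)$. Since the STFT interacts cleanly with complex conjugation, $M^{p,q}$ is closed under $f \mapsto \bar f$, so $u, v \in M^{p,q}$. The positive/negative parts satisfy $u_\pm = \tfrac12(|u| \pm u)$ and similarly for $v$, so the membership $u_\pm, v_\pm \in M^{p,q}$ is reduced to the single assertion
\[
f \in M^{p,q} \ \Longrightarrow\ |f| \in M^{p,q}, \qquad 1 \leq q \leq 2.
\]

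The heart of the argument is establishing this modulus closure, which I expect to be the main obstacle. The condition $q \leq 2$ is sharp here: the remark preceding the theorem (together with \cite[Cor.~3.2]{Bhimani2016} cited in the paper) indicates that this closure fails for larger $q$, so any correct proof must exploit a structural feature of $M^{p,q}$ available only in the Hausdorff--Young regime $q \leq 2$. My approach is to use a Gabor/Feichtinger atomic decomposition $f = \sum_{k} c_k\, \pi(\lambda_k)\phi$ with $\|c\|_{\ell^{p,q}} \approx \|f\|_{M^{p,q}}$ and $\phi$ chosen non-negative. Since $|\pi(\lambda_k)\phi| = T_{x_k}\phi$ (the phase factor vanishes under the modulus), the triangle inequality gives $|f(y)| \leq \sum_k |c_k|\, T_{x_k}\phi(y)$; the re-synthesis bound valid for $q \leq 2$ then yields $\||f|\|_{M^{p,q}} \lesssim \|c\|_{\ell^{p,q}} \approx \|f\|_{M^{p,q}}$. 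An alternative route is to identify $M^{p,q}$ with a Wiener-type amalgam space in this range and to observe that the amalgam norm is trivially invariant under $|\,\cdot\,|$.

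With the modulus closure secured, Proposition~\ref{prop2} applies to each of the four non-negative functions $u_\pm$ and $v_\pm$, giving
\[
\lim_{t \to 0}(\varphi_t * g)(x) = g(x) \ \text{ for a.e.\ } x, \qquad g \in \{u_+, u_-, v_+, v_-\}.
\]
Linearity of convolution in its second argument and of pointwise limits then combines these four identities into
\[
\lim_{t \to 0}(\varphi_t * f)(x) = f(x) \ \text{ for a.e.\ } x,
\]
which is the desired conclusion for every $f \in M^{p,q}$ with $1 \leq p < \infty$ and $1 \leq q \leq 2$. Apart from the modulus-closure step, every other ingredient is linear bookkeeping and a direct appeal to Proposition~\ref{prop2}.
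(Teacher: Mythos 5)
Your reduction hinges on the claim that $f\in M^{p,q}$ implies $|f|\in M^{p,q}$ for $1\le q\le 2$, and that is precisely where the argument breaks down: this closure is not available, and the paper itself flags it as the obstruction (see the remark following Theorem~\ref{th7}, citing \cite[Corollary 1.3]{BhimaniNMJ} and \cite[Corollary 3.2]{Bhimani2016}). Those references show that essentially only real-analytic functions operate on modulation spaces such as $M^{1,1}$, so $t\mapsto|t|$ does not preserve $M^{1,1}$ --- a case squarely inside your range $1\le q\le 2$; and since $u_\pm\in M^{p,q}$ is equivalent to $|u|\in M^{p,q}$ once $u\in M^{p,q}$, the positive/negative-part decomposition already requires the false statement. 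Neither of your suggested repairs works. The Gabor bound $|f|\le\sum_k|c_k|\,T_{x_k}\phi$ is only a pointwise domination, and for $q<\infty$ the $M^{p,q}$ norm is not monotone under pointwise domination (the STFT in $\xi$ is the Fourier transform of $f\cdot T_x\bar\phi$, which destroys pointwise ordering); the synthesis operator needs the phases $M_{\xi_k}$ to land back in $M^{p,q}$, and stripping them changes the time-frequency content. Likewise, $M^{p,q}$ is a Wiener amalgam whose \emph{local} component is $\mathcal{F}L^q$, not a Lebesgue space, so its norm is not modulus-invariant except at $p=q=2$.

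The paper's proof avoids the modulus entirely: it never places $|f|$ in a modulation space. Instead it uses the Cauchy--Schwarz-type bound $(|f|*|\phi^*|)^2\le |f|^2*|\phi^*|^2$, observes that $|f|^2=f\bar f$ lies in some modulation space by the algebra property of Lemma~\ref{ap} --- this is exactly where $q\le 2$ enters, since $\frac1q+\frac1q=1+\frac1{q_0}$ forces $\frac1{q_0}=\frac2q-1\ge 0$ --- and deduces that $|f|*|\phi^*|$ lies in some $L^r$, which is the only input the machinery of Proposition~\ref{prop2} (via Lemmas~\ref{lm2} and~\ref{lm3}) actually needs. If you want to salvage your outline, replace the modulus-closure step by this convolution estimate; the decomposition into four non-negative pieces then becomes unnecessary.
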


\begin{proof}
For $f \in M^{p,q}$, we have
\begin{equation}\label{square}
(|f| * |\phi^*|)^2 \leq f^2 * |\phi^*|^2.
\end{equation}
If it can be shown that the square of any function in $M^{p,q}$ ($1 \leq p < \infty,\ 1 \leq q \leq 2$) lies in some other modulation space, then the right-hand side of \eqref{square} will belong to some $L^p$ space. This would imply
$|f| * |\phi^*| \in L^{2p}$,
from which the desired result follows by an argument analogous to Proposition \ref{prop2}.

The final step is to determine the modulation space containing the square $f^2$ of a given function $f$. By invoking the algebra property in Lemma \ref{lm1}~\eqref{ap}, we conclude that $f^2$ belongs to an appropriate modulation space, thereby completing the proof of Theorem \ref{th4}.

\end{proof}

\section{Heat equation with Laplacian}\label{sec4}
In this section, we recall the definition of the weight classes $D^h_{p,q}(\mathbb{R}^{2n})$ as introduced in the introduction. We then proceed to establish Theorem \ref{th1} (Laplacian part) by first proving a series of auxiliary lemmas that are essential to the overall argument.

\begin{remark}[proof strategy]\label{psrmt}
Let us briefly outline the key ideas used to characterize the weighted modulation spaces in which almost everywhere convergence holds (i.e., the strategy for proving Theorem \ref{th1}):
\begin{itemize}
\item[-]By proving Proposition \ref{prop1}, we can infer the appropriate weight classes in the context of modulation spaces.

\item[-]By Lemma \ref{lm6}, the weight class on $\mathbb{R}^{2n}$, as considered in Theorem \ref{th1}, can be suitably related to the previously studied weight class on $\mathbb{R}^n$ (see \eqref{Lp-weights}). This correspondence ensures that the weight class on $\mathbb{R}^{2n}$ satisfies the necessary structural conditions required to apply existing results from the theory of weighted Lebesgue spaces.

\item[-] Then, by Lemma \ref{lm5}, we deduce that if $f \in M_v^{p, q}$ is non-negative, there exists a test function $\phi \in \mathcal{S}$ such that $f * |\phi| \in L_v^{p}$ for some suitable $v$ on $\mathbb{R}^n$. This estimate allows us to apply known boundedness results within the framework of weighted Lebesgue spaces.
\item[-]Conversely, pointwise convergence ensures that \( f \) belongs to the required weighted space \( M_v^{p,q} \) (as shown in the proof of Proposition \ref{prop1}).

\item[-] However, in the case of the Hermite operator, additional effort is required to address this situation effectively; see Section \ref{sec5} for details.
\end{itemize}
\end{remark}
%Before proceeding, we recall the following definitions.

\begin{lemma}\label{lm6}
Let $v$ be a strictly positive weight in $\mathbb{R}^{2n}.$ Denote $v_\xi(x)=v(x,\xi)$ with $\xi$ fixed. If $v\in D^h_{p,q}(\mathbb{R}^{2n}),$ then $v_\xi\in D^h_p(\mathbb{R}^{n})$ for almost every \( \xi \in \mathbb{R}^n \).
\end{lemma}
\begin{proof}
Firstly, assume that $v \in D_{p, q}^{h}$. Then by definition there exist $t_{0}$ such that $h_{t_{0}} \in M_{v^{-1}}^{p, q}$. That is $V_{\phi} h_{t_{0}} \in L_{v^{-1}}^{p, q},$ where $V_{\phi} h_{t_{0}}$ is the short time Fourier transform of $h_{t_{0}}$ with respect to test function $\phi\in\mathcal{S}\backslash\{0\}.$ Without loss of generality we can choose $\phi=h_{t_{0}}.$
We can write $h_{t_{0}}(x)=D_{\sqrt{4 \pi {t_0}}}h(x),$ where the notation $D_{t} h(x)=t^{-n} h\left(t^{-1} x\right)$ with $h(x)=e^{-\pi |x|^2}$. Now consider $V_{h_{t_{0}}} h_{t_{0}}:$
\begin{align}\label{eq22}
V_{h_{t_{0}}} h_{t_{0}}(x, \xi) & =\int_{\mathbb R^{n}} D_{\sqrt{4 \pi t_0}} h(y) \,D_{\sqrt{4 \pi t_0}} h(y-x)\, e^{-2 \pi i y \cdot \xi} d y \nonumber\\
& =\int_{\mathbb R^{n}} D_{\sqrt{8 \pi t_0}} h(x) \,D_{\sqrt{2 \pi t_0}} h\left(y-\frac{x}{2}\right)\, e^{-2 \pi i y \cdot \xi} d y \nonumber\\
& =D_{\sqrt{8 \pi t_0}} h(x) \,\mathcal{F}\left({T_{x / 2} D_{\sqrt{2 \pi t_0}}}h\right)(\xi) \nonumber\\
& =D_{\sqrt{8 \pi t_0}} h(x)\, M_{x / 2}\left(\mathcal{F}(h)\right)(\sqrt{2 \pi t_0} \xi) \nonumber\\
& = e^{-2 \pi^{2} t_0 |\xi|^{2}}\,e^{-\pi i x \cdot \xi}\, h_{\frac{t_0}{2}}(x).
\end{align}
From equation \eqref{eq22}, we deduce that \( h_{\frac{t_0}{2}} \in L^p_{v_\xi} \) for almost every \( \xi \in \mathbb{R}^n \), since \( V_{h_{t_{0}}} h_{t_{0}} \in L_{v^{-1}}^{p, q} \). Hence we can conclude that \( v_\xi \in D^h_p \) for almost every \( \xi \in \mathbb{R}^n .\) This completes the proof.
\end{proof}

\begin{lemma}\label{lm5}
	Let $1 \leq p,q< \infty$ and  $v \in D^h_{p,q}(\mathbb{R}^{2n}).$ Suppose   $f \in M_v^{p, q}\cap L^1_{loc}$ be  
	non-negative. Then there exists some $\phi \in \mathcal S$
	such that $f * |\phi|\in L_v^{p}$ for some $v\in D^h_p(\mathbb{R}^n).$
    Moreover $\mathcal Mf * |\phi|\in L_u^{p}$ for some weight $u$ on $\mathbb{R}^{n}.$
\end{lemma}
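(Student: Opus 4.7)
The plan is to pass from the two-dimensional modulation space weight on $\mathbb{R}^{2n}$ to a one-dimensional Lebesgue space weight on $\mathbb{R}^n$ via a slicing argument, and then exploit the sign structure of a real, even, non-positive Gaussian window to identify $f*|\phi|$ with the $\xi$-slice of the STFT at a good frequency. First, by Lemma \ref{lm6}, the hypothesis $v \in D^h_{p,q}(\mathbb{R}^{2n})\cup D^P_{p,q}(\mathbb{R}^{2n})$ produces a full-measure set of frequencies $\xi$ for which the slice $v_\xi := v(\cdot,\xi)$ belongs to $D^h_p(\mathbb{R}^n)$ -- in the Poisson case we use $D^P_p \subset D^h_p$, which follows from the faster (Gaussian) decay of the heat kernel. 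From $\|f\|_{M_v^{p,q}} = \|V_\phi f\|_{L^{p,q}_v} < \infty$ together with Fubini, the map $\xi \mapsto \|V_\phi f(\cdot,\xi)\|_{L^p_{v_\xi}}$ lies in $L^q(\mathbb{R}^n)$, hence is finite for a.e. $\xi$. Intersect the two full-measure sets and fix a good frequency $\xi_0$, tentatively setting $v := v_{\xi_0}$.

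Second, choose $\phi \in \mathcal{S}$ real, even, and non-positive, e.g.\ $\phi(y) = -e^{-\pi|y|^2}$, so that $\phi^* = \phi$ and $|\phi| = -\phi$. The STFT identity of Lemma \ref{lm1}, combined with $f \ge 0$ and $\phi \le 0$, yields
\[
(f*|\phi|)(x) = -(f*\phi)(x) = -V_\phi f(x,0) = |V_\phi f(x,0)|.
\]
To transfer the known integrability of $V_\phi f(\cdot,\xi_0)$ to integrability at $\xi = 0$, I exploit the window-independence of the modulation norm (valid for moderate weights in the $D^h_{p,q}$-class, by \cite[Prop.~11.3.2]{GroBook}). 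For the modulated window $\psi := M_{\xi_0}\phi \in \mathcal{S}$, Lemma \ref{lm1} gives, after a direct computation, $|V_\psi f(x,\eta)| = |V_\phi f(x,\eta+\xi_0)|$ for all $x,\eta$, so in particular $|V_\psi f(x,-\xi_0)| = |V_\phi f(x,0)| = (f*|\phi|)(x)$. Since $\|V_\psi f\|_{L^{p,q}_v} \asymp \|V_\phi f\|_{L^{p,q}_v} < \infty$, Fubini applied to $V_\psi f$ produces its own a.e.\ set of good frequencies; after a Fubini-type argument in the $(\xi_0,\eta)$-plane (using moderateness of $v$ to control the weight shift $v(\cdot,\eta-\xi_0) \le C(\xi_0)\, v(\cdot,\eta)$), we may further restrict $\xi_0$ to a full-measure set where $-\xi_0$ is also good for $\psi$ and $v_{-\xi_0} \in D^h_p$. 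Then
\[
f*|\phi|(x) = |V_\psi f(x,-\xi_0)| \in L^p_{v_{-\xi_0}},
\]
and with $v := v_{-\xi_0} \in D^h_p(\mathbb{R}^n)$ the first part is complete.

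For the moreover assertion, the non-positivity of $\phi$ re-enables the sign-reversal identity from Lemma \ref{lm2}: writing $\sup_\vartheta (\vartheta*f)(y)\phi(x-y) = -\inf_\vartheta (\vartheta*f)(y)|\phi|(x-y)$ and applying Fatou's lemma to a minimizing sequence in $\mathcal{X}$ yields the pointwise bound $\mathcal{M}f * |\phi|(x) \le \mathcal{M}(f*|\phi|)(x)$ for a.e.\ $x$. Invoking the weighted Hardy--Littlewood maximal theorem on $L^p_u$ for a Muckenhoupt weight $u \in A_p$ suitably dominated by $v$ (or simply choosing a Gaussian-type $u$ compatible with both the $A_p$ condition and the $L^p_v$-integrability of $f*|\phi|$) gives $\mathcal{M}f * |\phi| \in L^p_u$.

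The principal obstacle is the $\xi_0 \to 0$ transfer in the second paragraph: the coupled nature of the condition ``$-\xi_0$ good for the $\xi_0$-dependent window $\psi_{\xi_0}$'' must be handled via the Fubini argument, leveraging the moderateness of $v$ (which guarantees window-independence of $M^{p,q}_v$) together with the explicit a.e.\ selection from Lemma \ref{lm6}; direct Fourier-side pointwise estimates relating $|V_\phi f(x,0)|$ to $|V_\phi f(x,\xi_0)|$ are unavailable due to potential phase cancellation.
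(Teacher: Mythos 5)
Your approach has a genuine gap at its central step: the transfer from ``a.e.\ frequency $\xi$'' to the single frequency $\xi=0$. The identity $(f*|\phi|)(x)=|V_\phi f(x,0)|$ is correct for a non-positive even real window, but the finiteness of $\|V_\phi f\|_{L^{p,q}_v}$ only controls the slices $V_\phi f(\cdot,\xi)$ for almost every $\xi$; the slice norm $\xi\mapsto\|V_\phi f(\cdot,\xi)\|_{L^p_{v_\xi}}$ can blow up as $\xi\to 0$ while remaining $q$-integrable, so neither Fubini nor continuity of $V_\phi f$ rescues the $\xi=0$ slice. The modulated-window maneuver does not help: since $|V_{M_{\xi_0}\phi}f(x,\eta)|=|V_\phi f(x,\eta+\xi_0)|$, the quantity $|V_{M_{\xi_0}\phi}f(x,-\xi_0)|$ is identically equal to $|V_\phi f(x,0)|$ for every $\xi_0$, so no averaging or Fubini argument over $\xi_0$ can produce new information --- you are always evaluating on the measure-zero diagonal $\eta=-\xi_0$, about which Fubini says nothing. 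The requirement that ``$-\xi_0$ be good for the $\xi_0$-dependent window'' is thus not merely delicate but vacuous, and the first assertion of the lemma is not established by this route. (A secondary issue: in the ``moreover'' part you invoke the weighted maximal theorem for an $A_p$ weight, but the natural weights here are Gaussians, which are not doubling and hence not in $A_p$.)

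The paper's proof avoids slicing in $\xi$ entirely and asks for much less: the target weight in the conclusion need not be a slice of the original $v$, only \emph{some} element of $D^h_p(\mathbb{R}^n)$. One fixes the Gaussian window $|\phi|=e^{-|x|^2}$, multiplies $f*|\phi|$ by $\eta=e^{-|x|^2}$, and tests against $\rho\in L^{p'}$; completing the square in the double integral and applying H\"older reduces everything to the single scalar quantity $\int f(y)e^{-|y|^2/2}\,dy$. This is finite precisely because $e^{-|y|^2/2}\lesssim h_{t_0}(y)$ for $t_0\le\tfrac12$ and the hypothesis $v\in D^h_{p,q}$ (resp.\ $D^P_{p,q}$) places $h_{t_0}$ (resp.\ $p_{t_0}$) in the dual space $M^{p',q'}_{v^{-1}}$, so the duality $M_v^{p,q}\times M_{v^{-1}}^{p',q'}$ applies. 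One concludes $f*|\phi|\in L^p_{\eta^p}$ with the explicit weight $\eta^p=e^{-p|x|^2}\in D^h_p$. Your sign-reversal argument for $\mathcal{M}f*|\phi|\le\mathcal{M}(f*|\phi|)$ does match the paper's treatment of the ``moreover'' clause, but it rests on the first part, which needs to be redone along the duality lines above.
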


\begin{proof}
Let $\rho \in L^{p'}(\mathbb{R}^n)$, where $p'$ is the Hölder conjugate of $p$. Denote $\eta(x)=e^{- |x|^2}.$ Consider
$$
\langle \eta(f * |\phi|),\,\rho \rangle = \int_{\mathbb{R}^n} \eta(x)\,(f * |\phi|)(x)\, \overline{\rho(x)} \, dx = \int_{\mathbb{R}^n} \int_{\mathbb{R}^n} \eta(x)\,f(y) \,|\phi(x - y)| \,\overline{\rho(x)} \, dy \, dx.
$$
Let's choose $\eta(x)=|\phi|(x)=e^{- |x|^2}.$ By Fubini's theorem and Hölder's inequality, we obtain
\begin{align}\label{dual}
|\langle \eta(f * |\phi|),\, \rho \rangle|
&\leq \int_{\mathbb{R}^n} f(y) \left( \int_{\mathbb{R}^n} e^{- |x|^2}e^{- |x-y|^2} |\rho(x)| \, dx \right) dy  \nonumber\\
&=  \int_{\mathbb{R}^n} f(y)\, e^{- \frac{|y|^2}{2}}\left( \int_{\mathbb{R}^n} e^{- |x-\frac{y}{2}|^2} |\rho(x)| \, dx \right) dy  \nonumber\\
&= \int_{\mathbb{R}^n} f(y)\,e^{- \frac{|y|^2}{2}} \left( \int_{\mathbb{R}^n} |T_{\frac{y}{2}} \phi(x)| \, |\rho(x)| \, dx \right) dy \nonumber\\
&\leq \int_{\mathbb{R}^n} f(y)\, e^{- \frac{|y|^2}{2}}\|T_{\frac{y}{2}} \phi\|_{L^p} \,\|\rho\|_{L^{p'}} \, dy \nonumber\\
&= \|T_{\frac{y}{2}} \phi\|_{L^p}\,\|\rho\|_{L^{p'}} \int_{\mathbb{R}^n} f(y) \, e^{- \frac{|y|^2}{2}} \, dy.
\end{align}
Let $f \in M_v^{p,q}$ with $v \in D^h_{p,q}(\mathbb{R}^{2n})$. We can choose $t_0 \leq \frac{1}{2}$ such that $\|h_{t_0}\|_{M_{v^{-1}}^{p', q'}} < \infty$, where $h_t$ denotes the heat kernel defined in \eqref{heatKernel}. Then, we have
$$
|\langle \eta(f * |\phi|), \rho \rangle| \lesssim \|T_{\frac{y}{2}} \phi\|_{L^p} \, \|\rho\|_{L^{p'}} \int_{\mathbb{R}^n} f(y) \, h_{t_0}(y) \, dy.
$$
Using the duality between $M_v^{p,q}$ and $M_{v^{-1}}^{p', q'}$, this yields
$$
|\langle \eta(f * |\phi|), \rho \rangle| \lesssim \|T_{\frac{y}{2}} \phi\|_{L^p} \, \|\rho\|_{L^{p'}} \, \|f\|_{M_v^{p,q}} \, \|h_{t_0}\|_{M_{v^{-1}}^{p', q'}}.
$$
Taking the supremum over all $\rho \in L^{p'}$ with $\|\rho\|_{L^{p'}} \leq 1$, we obtain
$$
\|\eta(f * |\phi|)\|_{L^p} \lesssim (2\pi)^{\frac{n}{2}} \, \|T_{\frac{y}{2}} \phi\|_{L^p} \, \|f\|_{M_v^{p,q}} \, \|h_{t_0}\|_{M_{v^{-1}}^{p', q'}}.
$$
Therefore, we conclude that $f * |\phi| \in L^p_{\eta^p}$, where $\eta^p \in D^h_p$.
\end{proof}

\begin{proposition}[Characterization of $D^h_{p,q}$]\label{prop1}
Suppose \( 1 \leq p ,q< \infty \). The weight \( v \in D^h_{p,q}(\mathbb{R}^{2n}) \)  if and only if there exists \( t_0 > 0 \) and a weight \( u \) on $\mathbb{R}^{2n}$ such that the operator \( f \mapsto h_{t_0} * f \) maps \( M_v^{p,q} \) into \( M_u^{p,q} \) with the norm inequality
\[ \| h_{t_0} * f \|_{M_u^{p,q}} \lesssim \| f \|_{M_v^{p,q}}. \]
\end{proposition}
We need the following lemma to prove Proposition \ref{prop1}.

\begin{lemma}\label{lm4} Suppose \( 1 \leq p, q < \infty \). Define 
\[ g_{t}(x) = \left\|h_{t}(x - \cdot)\right\|_{M_{v^{-1}}^{p', q'}} \quad\text{for }t>0.\]
If \( v \in D^h_{p,q}(\mathbb{R}^{2n}) \), then there exist \( t_0>0 \) and a weight \( u \) on $\mathbb{R}^{2n}$ such that $\left\|g_{t_0}\right\|_{M_u^{p, q}} < \infty.$
\end{lemma}
\begin{proof}
Consider the function $g_t(x)$, which can be expressed as follows
\[
g_t(x) = \left\| h_t(x - \cdot) \right\|_{M_{v^{-1}}^{p', q'}} \lesssim \left\| V_{h_t} h_t(x - \cdot, \cdot) \right\|_{L_{v^{-1}}^{p', q'}}.
\]
Using equation \eqref{eq22}, we can write \( g_{t_0}(x) \) as
\begin{equation}\label{eq38}
g_{t_0}(x) = \left( \int_{\mathbb{R}^n} \left( \int_{\mathbb{R}^n} \left( e^{-2 \pi^2 {t_0} |\xi|^2} \, h_{\frac{t_0}{2}}(x - y) \right)^p v(y, \xi) \, dy \right)^{\frac{q}{p}} d\xi \right)^{\frac{1}{q}}
\end{equation}
for some $t_0>0$ (from Lemma \ref{lm6}). Next, by using the conditions
\[
\begin{cases}
|x - y| \leq |x| & \Rightarrow |y| \leq |x - y| + |x| \leq 2|x| \\
|x - y| > |x| & \Rightarrow |y| \leq |x - y| + |x| \leq 2|x - y|,
\end{cases}
\]
we can deduce that \( h_{t_0}(x - y) \lesssim h_{\frac{t_0}{4}}(y) \).
Substituting this estimate into equation \eqref{eq38}, we obtain
\[
g_{t_0}(x) \lesssim \left\| V_{h_{\frac{t_0}{8}}} h_{\frac{t_0}{8}} \right\|_{L_{v^{-1}}^{p', q'}}.
\]
Hence, it follows that \( g_{t_0}(x) < \infty \) a.e. for \( x \), since we are given that 
\[ \left\| h_{\frac{t_0}{8}} \right\|_{M_{v^{-1}}^{p', q'}} < \infty. \]
Now, we will choose a weight \( u \in L_{\text{loc}}^1 \) such that \( g_{t_0} \in M_u^{p, q} \), which is equivalent to showing \( V_\phi g_{t_0} \in L_u^{p, q} \) for some $\phi\in\mathcal{S}.$ By choosing \( u \in L_{\text{loc}}^1\) as
\[ 
u(x, \xi) = 
\begin{cases}
	1 & \text{if } |V_\phi g_{t_0}(x, \xi)| \leq u'(x, \xi) \\
	\frac{u'(x, \xi)}{|V_\phi g_{t_0}(x, \xi)|} & \text{if } |V_\phi g_{t_0}(x, \xi)| > u'(x, \xi),
\end{cases}
\]
where $u'\in L^{p,q},$ we get that \( g_{t_0} \in L_u^{p, q} \). This means we are choosing \( u \) in such a way that \( |V_\phi g_{t_0}| \cdot u\) is dominated by a function in \( L^{p, q} \). 
\end{proof}

\begin{proof}[\bf Proof of Proposition \ref{prop1}]
	By using the Moyal identity (Lemma \ref{lm1} \eqref{eq37}) and H{\"o}lder's inequality, for any \( t > 0 \), we have
	\begin{eqnarray}
		\left\|h_t * f\right\|_{M_u^{p, q}}\nonumber
		& = & \left\|\int h_t(\cdot - y) f(y) \, dy\right\|_{M_u^{p, q}} \\\nonumber
		& \lesssim &  \left\|\int V_\phi h_t(\cdot - y) V_\phi f(y) \, dy\right\|_{M_u^{p, q}} \\\nonumber
        & \leq &   \left\| \left\|f\right\|_{M_v^{p, q}}g_{t}(.)\right\|_{M_u^{p, q}}=\left\|f\right\|_{M_v^{p, q}} \left\|g_{t}\right\|_{M_u^{p, q}}, 
	\end{eqnarray}
where $g_t$ is as defined in Lemma~\ref{lm4}.
 Hence, we have \( \left\|h_{t_0} * f\right\|_{M_u^{p, q}} \lesssim \left\| f \right\|_{M_v^{p, q}} \) for some \( t_0 >0\), since Lemma \ref{lm4} guarantees that there exist \( t_0 \) and a weight \( u \) such that $\left\|g_{t_0}\right\|_{M_u^{p, q}} < \infty.$
 
\smallskip
	
	Conversely, let 
    \[ \left\|h_{t_0} * f\right\|_{M_u^{p, q}} \lesssim \left\| f \right\|_{M_v^{p, q}} \] 
    for all \( f \in M_v^{p, q} \). Thus, \( |h_{t_0} * f(x)| < \infty \) almost everywhere for all \( f \in M_v^{p, q} \). Fix \( x_0 \) such that \( h_{t_0} * f(x_0) < \infty \). Then we will show that \( h_{\frac{t_0}{4}} * f(x) < \infty \) for all \( x \). 
Let assume \(x \neq x_{0}\). Note that 
\begin{equation}\label{de1}
\begin{cases} |x-y| \leq |x-x_{0}| \implies\left|y-x_{0}\right| \leq 2\left|x-x_{0}\right|\\
 |x-y| \geq |x-x_{0}| \implies  \left|x_{0}-y\right| \leq 2|x-y|.
\end{cases}
\end{equation}
By \eqref{de1},  we obtain
\begin{eqnarray*}h_{\frac{t_{0}}{4}}(x-y) \lesssim
\begin{cases} \frac{h_{t_{0}}\left(x_{0}-y\right)}{h_{t_{0}}\left(2\left(x-x_{0}\right)\right)} \lesssim h_{t_{0}}\left(x_{0}-y\right) & if \  |x-y| \leq |x-x_{0}| \\
 h_{\frac{t_{0}}{4}}\left(\frac{x_{0}-y}{2}\right) \lesssim h_{t_{0}}\left(x_{0}-y\right) & if \ |x-y| \geq |x-x_{0}|.
 \end{cases}
\end{eqnarray*}
Thus,  we have 
\begin{eqnarray*}
h_{\frac{t_0}{4}} \ast f(x)
& \lesssim & \left(\int_{|x-y|<|x-x_{0}|} + \int_{|x-y| \geq |x-x_{0}|}\right) h_{t_{0}}\left(x_{0}-y\right) f(y) d y\nonumber\\
& \lesssim & h_{t_0}\ast f (x_0)
 <  \infty  
\end{eqnarray*}
 for all $x \in \mathbb{R}^{n} \backslash \{x_{0}\}$. Note that, later inequality in \eqref{de1} also hold for $x=x_0.$ 
That is
\[0 \leq \int_{\mathbb{R}^{n}} h_{\frac{t_{0}}{4}}\left(x_{0}-y\right) f(y) d y \lesssim \int_{\mathbb{R}^{n}} h_{t_{0}}\left(x_{0}-y\right) f(y) d y < \infty.\]
Thus,   we have
\[\int_{\mathbb{R}^{n}} h_{\frac{t_{0}}{4}}(x-y) f(y) d y< \infty \quad for\ all\  \ x\in \mathbb R^n.\] 
In particular, \( h_{\frac{t_0}{4}} * f(0) < \infty \), i.e.,
	\[ \int h_{\frac{t_0}{4}}(y) f(y) \, dy < \infty. \]
By duality, \( h_{\frac{t_0}{4}} \in M_{v^{-1}}^{p', q'} \), we can conclude that \( v \in D^h_{p, q} \).
\end{proof}

Now we are ready to prove our main result.
\begin{proof}[\bf Proof of Theorem \ref{th1}~(Laplacian case)]
Assume that \( v \in D^h_{p,q} \) and consider the following expression
\begin{align}\label{eq11}
		\lim_{t \rightarrow 0} \left( h_{t} * (f * M_{\xi}  {\phi^*})\right)(x)
		&= \lim_{t \rightarrow 0} \left(( h_{t} * f) * M_{\xi}  {\phi^*}\right)(x) \nonumber\\
		&= \lim_{t \rightarrow 0} \int_{\mathbb{R}^n}  (h_{t} * f)(y) M_{\xi}  {\phi^*}(x - y) \, dy.
	\end{align}
We aim to use the dominated convergence theorem (DCT) on the right-hand side of equation (\ref{eq11}) and interchange the limit and integration. To do this, we define a sequence of functions by fixing $x$ as follows
\[ F_t(y) :=   (h_{t} * f) (y) M_{\xi}  {\phi^*}(x - y). \]

Now, applying Theorem \ref{th5}, we obtain the inequality
\begin{equation}\label{eq13}
	|F_t(y)| \leq A\, |\mathcal{M}f(y) M_{\xi}  {\phi^*}(x - y)| \leq A\, \mathcal{M}f(y) | {\phi^*}|(x - y)
\end{equation}
for all \( t > 0 \). In order to utilize the DCT, we need the right-hand side of equation (\ref{eq13}) to be an integrable function. Employing Lemma \ref{lm5}, we observe that
\[ \int_{\mathbb{R}^n} \mathcal{M}f(y) | {\phi^*}|(x - y) \, dy = \mathcal{M}f * | {\phi^*}|(x) < \infty \]
for almost every \( x \). Consequently, we can apply the DCT in equation (\ref{eq11}) to obtain
\begin{align}\label{eq15}
		\lim_{t \rightarrow 0} \left( h_{t} * (f * M_{\xi}  {\phi^*})\right)(x)
		&= \int_{\mathbb{R}^n} \lim_{t \rightarrow 0} \left( h_{t} * f\right)(y) M_{\xi}  {\phi^*}(x - y) \, dy \nonumber\\
		&= \left(\lim_{t \rightarrow 0} ( h_{t} * f)\right) * M_{\xi}  {\phi^*}(x).
	\end{align}
However, by Lemma \ref{lm5}, which states that $v \in D^h_{p,q}$ implies $v_\xi \in D^h_p$ for almost every $\xi \in \mathbb{R}^n$, it follows that $f \in M_v^{p, q}$ implies $f * M_{\xi} {\phi^*} \in L_{v_\xi}^{p}$ for almost every $\xi \in \mathbb{R}^n$. Consequently, by applying Theorem \ref{viviT}, which states that \( v \in D^h_p(\mathbb{R}^{n}) \) if and only if \(\lim_{t \to 0} (h_t * f)(x) = f(x) \quad \text{a.e. for all } f \in L^p_v(\mathbb{R}^n),\)
we obtain
\begin{equation}\label{eq21}
	\lim_{t \rightarrow 0} \left( h_{t} * \left(f * M_{\xi}  {\phi^*}\right)\right)(x) = f * M_{\xi}  {\phi^*}(x).
\end{equation}

Now, comparing equations (\ref{eq15}) and (\ref{eq21}), we derive
\[ \left(\lim_{t \rightarrow 0} \left( h_{t} * f\right) - f\right) * M_{\xi}  {\phi^*}(x) = 0 \]
for almost every \( x \in \mathbb{R}^n \). Therefore, we can conclude that \( \lim_{t \rightarrow 0} ( h_{t} * f) = f \) almost everywhere.

\smallskip

Conversely, let us assume \( \lim_{t \rightarrow 0} h_{t} * f(x) = f(x) \) almost everywhere for all non-negative \( f \in M_v^{p, q} \). By Proposition \ref{prop1}, there exists \( t_0>0 \) such that \(  h_{t_0} * f(x) < \infty \) almost everywhere for all non-negative \( f \in M_v^{p, q} \). Fix \( x_0 \) such that \(  h_{t_0} * f(x_0) < \infty \). Following the same approach as in the converse part of Proposition \ref{prop1}, we can similarly obtain that
$h_{\frac{t_0}{4}} * f(x) < \infty$
for all \( x \in \mathbb{R}^n \), i.e.,
\[ \int_{\mathbb{R}^n} h_{\frac{t_0}{4}}(y) f(y) \, dy < \infty. \]
By duality, \( h_{\frac{t_0}{4}} \in M_{v^{-1}}^{p', q'} \), which implies that \( v \in D^h_{p,q} \).

\end{proof}

\section{Heat equation with Hermite operator}\label{sec5}
In this section, we consider the heat semigroup $e^{-t H} f$ and derive their simplified expressions. We then proceed to prove our main result, Theorem~\ref{th1} (Hermite part).

\smallskip

Consider the heat equation \eqref{eq9} associated with the harmonic oscillator \( H \), whose heat semigroup is given by \eqref{H-heatkernel}.
The spectral decomposition of the Hermite operator is expressed in \eqref{sd}, where the eigenfunctions are the normalized Hermite functions
\[\Phi_\alpha(x) = \prod_{j=1}^n \tilde{h}_{\alpha_j}(x_j)\]
and each \( \tilde{h}_k \) denotes the one–dimensional normalized Hermite function, given by
\[\tilde{h}_k(x)
= \left( \sqrt{\pi}, 2^k k! \right)^{-1/2} (-1)^k e^{\frac{x^2}{2}} \frac{d^k}{dx^k}\big( e^{-x^2} \big).\]
These functions form an orthonormal basis for $L^2$ and satisfy the eigenvalue relation $$H \Phi_\alpha = (2|\alpha| + n)\Phi_\alpha$$ where $|\alpha| = \alpha_1 + \dots + \alpha_n$. A fundamental identity involving Hermite functions is Mehler's formula (see \cite[Lemma 1.1.1]{MR1215939}), which for $|w| < 1$ takes the form
\begin{equation}\label{eq26}
\sum_{k=0}^{\infty} \frac{\tilde{h}_k(x) \tilde{h}_k(y)}{2^k k!} w^k=\left(1-w^2\right)^{-\frac{1}{2}} e^{-\frac{1}{2} \frac{1+w^2}{1-w^2}\left(x^2+y^2\right)+\frac{2 w}{1-w^2} x y}.
\end{equation}
Using Mehler's formula \eqref{eq26}, we can rewrite the heat semigroup \eqref{H-heatkernel} as following
\begin{equation}\label{eq28}
e^{-t H} f(x) = \int_{\mathbb{R}^n} h_t^H(x, y) f(y) \, dy = \int_{\mathbb{R}^n} \frac{e^{-\left[\frac{1}{2}|x-y|^2 \coth 2t + x \cdot y \tanh t\right]}}{(2 \pi \sinh 2t)^{\frac{n}{2}}} f(y) \, dy
\end{equation}
as shown in \cite[Eq. (4.1.2)]{MR1215939}. By applying Stefano Meda's change of parameters
\[t = \frac{1}{2} \log \frac{1+s}{1-s} \quad \text{for } t \in (0, \infty) \text{ and } s \in (0,1),\]
equivalently $s=\tanh{t},$ we obtain
\begin{equation}\label{eq33}
h_{t}^H(x, y) = \left( \frac{1-s^2}{4 \pi s} \right)^{\frac{n}{2}} e^{-\frac{1}{4}\left[s|x+y|^2 + \frac{1}{s}|x-y|^2\right]}.
\end{equation}
In the limit as \( s \to 0^{+} \), it follows that \( t \to 0^{+} \) as well. From equation \eqref{eq33}, we observe that the following inequality holds
\begin{equation}\label{eq34}
h_{t}^H(x, y) \leq (1 - s^2) h_s(x - y),
\end{equation}
where \( h_s \) denotes the classical heat kernel \eqref{heatKernel}. Furthermore, using the conditions
\[
\begin{cases} 
2|x| < |y| & \Rightarrow |x + y| \leq 3|x - y| \\
2|x| \geq |y| & \Rightarrow |x + y| \leq 3|x|,
\end{cases}
\]
we can deduce the following estimates
\[
\begin{aligned}
& \left\{
\begin{array}{l}
e^{-\frac{1}{4}\left[s |x + y|^2 + \frac{1}{s} |x - y|^2 \right]} \geq e^{-\left( \frac{9 s^2 + 1}{s} \right) \frac{|x - y|^2}{4}} \\
e^{-\frac{1}{4}\left[s |x + y|^2 + \frac{1}{s} |x - y|^2 \right]} \geq e^{-\frac{9s}{4} |x|^2} e^{-\left( \frac{9 s^2 + 1}{s} \right) \frac{|x - y|^2}{4}}.
\end{array}
\right.
\end{aligned}
\]
Thus, for any \( (x, y) \) and \( 0 < s < 1 \), with the relation \( s = \tanh t \), the following inequality holds
\begin{equation}\label{eq36}
h_{t}^H(x, y) \geq e^{-\frac{9s}{4} |x|^2} \left( \frac{1 - s^2}{1 + 9 s^2} \right)^{\frac{n}{2}} h_{\frac{s}{1 + 9 s^2}}(x - y).
\end{equation}
In the following, we present the proof of the Hermite heat part of Theorem~\ref{th1}.

\begin{proof}[{\bf Proof of Theorem~\ref{th1}~(Hermite case)}]
Let \( f \) be a non-negative function in \( M^{p, q}_v \).  
By Lemma~\ref{lm5}, there exists a test function \( \phi \in \mathcal{S} \) such that \( f * |\phi| \in L_v^p \) for some weight \( v \in D^h_p \).  
To prove that \( \lim_{t \to 0} e^{-t H} f(x) = f(x) \) almost everywhere, it suffices to show that
\begin{equation}\label{eq53}
\left( \lim_{t \to 0} e^{-t H} f \right) * |\phi|(x) = f * |\phi|(x) \quad \text{almost everywhere}.
\end{equation}
Now, consider the left-hand side. By applying a method similar to that used in \eqref{eq11}-\eqref{eq13} and invoking the dominated convergence theorem, we can express
\[\left(\lim_{t \rightarrow 0} e^{-t H} f\right) * |\phi|(x) = \lim_{t \rightarrow 0} \left(e^{-t H} f * |\phi|\right)(x)\quad\mbox{almost everywhere}.\]
Now, we have
\begin{equation*}\label{}
	\begin{aligned}
		\lim_{t \rightarrow 0} (e^{-t H} f * |\phi|)(x)
		&= \lim_{t \rightarrow 0}\int_{\mathbb{R}^n}  e^{-t H} f(y) |\phi|(x - y) \, dy\\
		&= \lim_{t \rightarrow 0}\int_{\mathbb{R}^n} \int_{\mathbb{R}^n} h_{t}^H(y, y') f(y') \, |\phi|(x - y) \, dy'dy.
	\end{aligned}
\end{equation*}
Using upper bound \eqref{eq34} applying the associativity of convolution, we obtain
\begin{equation*}\label{}
	\begin{aligned}
 \lim_{t \rightarrow 0} (e^{-t H} f * |\phi|)(x)
		\leq&\lim_{s \rightarrow 0}\int_{\mathbb{R}^n} \int_{\mathbb{R}^n} (1 - s^2) h_s(y - y') f(y') \,  |\phi|(x - y) \, dy'dy \\     
		%=& \lim_{s \rightarrow 0}\int_{\mathbb{R}^n} \int_{\mathbb{R}^n} (1 - s^2) h_s(x-y) f(y') \,  |\phi|(y-y') \, dy'dy\\
        =&\lim_{s \rightarrow 0}(1 - s^2)\left((h_s\ast f)\ast |\phi|\right)(x)\\
        =&\lim_{s \rightarrow 0}\left(h_s\ast (f\ast |\phi|)\right)(x).
        %=& \lim_{s \rightarrow 0}\int_{\mathbb{R}^n} (1 - s^2) h_s(x-y) (f\ast |\phi|)(y) \,dy.
	\end{aligned}
\end{equation*}
However, by Lemma \ref{lm6}, we know that \( f * |\phi| \in L_{v}^{p} \) for some weight \( v \in D^h_p \). Consequently, using Theorem \ref{viviT}, we arrive
\begin{equation}\label{eq51}
\left(\lim_{t \rightarrow 0} e^{-t H} f\right) * |\phi|(x)\leq f * |\phi|(x)\text{ a.e. }x\in\mathbb{R}^n.
\end{equation}
Proceeding in a similar manner and utilizing the lower bound from equation \eqref{eq36}, we obtain the inequality
\begin{equation}\label{eq52}
\left( \lim_{t \to 0} e^{-t H} f \right) * |\phi|(x) \geq f * |\phi|(x) \quad \text{a.e. } x \in \mathbb{R}^n.
\end{equation}
Thus, combining \eqref{eq51} and \eqref{eq52}, we obtain the conclusion \eqref{eq53}, and hence establish the pointwise convergence almost everywhere.

Conversely, let us assume \( \lim_{t \rightarrow 0} e^{-t H} f(x) = f(x) \) almost everywhere for all non-negative \( f \in M_v^{p, q} \). Then, by \eqref{eq34} there exists \( t_0>0 \) such that 
\[e^{-t H} f(x)\lesssim  h_{t_0} * f(x) < \infty \] 
almost everywhere for all non-negative \( f \in M_v^{p, q} \). Following the same approach as in the converse part of Proposition  \ref{prop1}, we can similarly deduce that
\( h_{\frac{t_0}{4}} \in M_{v^{-1}}^{p', q'} \), which implies  \( v \in D^h_{p,q} \).
\end{proof}

\section{Concluding Remarks}\label{crf}

 Our main Theorems \ref{th1} and \ref{th7} naturally opens the door for the further investigation. In view of  ongoing research interest  mentioned in the introduction, we raise  some interesting open questions in the following points.   
 
\begin{enumerate}
\item Can we relax the  the non-negativity assumption on the initial data in Theorem \ref{th1}? The present proofs rely on positivity arguments and maximal function techniques. However, it would be of independent interest to determine whether the equivalence remains valid for general (possibly sign-changing) functions in weighted modulation spaces. See Remark \ref{rsi}.

\item It would be worthwhile to extend the analysis to other semigroups  such as Poisson-type semigroups or semigroups generated by more general differential operators. See for e.g. the discussion following Theorem \ref{viviT}. In such settings, additional analytical difficulties may arise due to weaker kernel regularity or the absence of explicit representations.

\item  Up to now our knowledge on maximal operator on modulation spaces is quite limited (see Theorem \ref{th7}).  The systematic investigation of the boundedness properties of maximal operators in modulation spaces remains an important open direction. 

\item  The combination of semigroup kernel estimates with time-frequency techniques provides a flexible method for deriving pointwise convergence theorems in non-classical function spaces.  This shed  new light on how large (rough)  initial data one can take,  see Remark \ref{example}.   We expect that the ideas developed here may serve as a foundation for further results on almost everywhere convergence and related fine properties of other evolution equations  in the modulation space setting.
\end{enumerate}

\subsection*{Acknowledgement} The second author gratefully acknowledges the support provided by IISER Pune, Government of India. We sincerely thank the referees for their careful reading of our manuscript and for their valuable comments and suggestions, which have significantly improved the presentation of the paper.

\bigskip

\bibliographystyle{plain}
\bibliography{bibliography.bib}

\end{document}